\tikzset{w/.style={circle, draw,inner sep=1pt},b/.style={circle,draw,fill,inner sep=2pt}, s/.style={rectangle, draw,inner sep=3pt}}
\newcommand{\nocontentsline}[3]{}
\newcommand{\tocless}[2]{\bgroup\let\addcontentsline=\nocontentsline#1{#2}\egroup}
\newcommand{\lra}{\longrightarrow}
\newcommand{\antishriek}{\text{\raisebox{\depth}{\textexclamdown}}}
\newcommand{\bK}{\mathbb{K}}
\newcommand{\bS}{\mathbb{S}}
\newcommand{\cC}{\mathcal{C}}
\newcommand{\cP}{\mathcal{P}}
\newcommand{\Ass}{\textit{Ass}}
\newcommand{\Comm}{\mathrm{Comm}}
\newcommand{\Coder}{\mathrm{Coder}}
\newcommand{\End}{\mathrm{End}}
\newcommand{\Hom}{\mathrm{Hom}}
\newcommand{\id}{\mathrm{id}}
\newcommand{\Pois}{\textit{Pois}}
\newcommand{\Lie}{\textit{Lie}}
\newcommand{\dgMod}{\texttt{dgMod}}
\newtheorem{thm}{Theorem}[section]
\newtheorem{lem}[thm]{Lemma}
\newtheorem{cor}[thm]{Corollary}
\newtheorem{prop}[thm]{Proposition}
\newtheorem*{thm*}{Theorem}
\newtheorem*{cor*}{Corollary}
\theoremstyle{definition}
\newtheorem*{defi*}{Definition}
\newtheorem{defi}[thm]{Definition}
\theoremstyle{remark}
\newtheorem{remark}[thm]{Remark}
\begin{document}
\title{Formality criteria for algebras over operads}
\address{Dipartimento di Matematica, Universit\`a di Pisa, Largo Bruno Pontecorvo 5, 56127 Pisa, Italy}
\email{valerio.melani@unipi.it}
\email{valerio.melani@outlook.com}
\author{Valerio Melani}
\address{Department of Mathematics, KU Leuven, Celestijnenlaan 200B, 3001 Leuven, Belgium}
\email{marcel.rubio@kuleuven.be}
\author{Marcel Rubi\'o}
\begin{abstract}
We study some formality criteria for differential graded algebras over differential graded operads. This unifies and generalizes other known approaches like the ones contained in \cite{Ka} and \cite{Ma}. In particular, we construct general operadic Kaledin classes and show that they provide obstructions to formality. Moreover, we show that an algebra $A$ is formal if and only if its operadic cohomology spectral sequence degenerates at $E_2$.
\end{abstract}
\maketitle

\tableofcontents

\addtocontents{toc}{\protect\setcounter{tocdepth}{2}}

\section*{Introduction}

One says that a differential graded algebra $A$ is \emph{formal} if it is quasi-isomorphic to its cohomology graded algebra $H(A)$. This notion has been studied in multiple occasions. The most famous instance of formality in the mathematical literature is probably the paper \cite{DGMS} by Deligne, Griffiths, Morgan and Sullivan, in which the authors showed that if $X$ is a compact K\"ahler manifold, then the de Rham algebra $\mathrm{DR}(X)$ of $X$ is formal. This has important consequences in the study of the topology of $X$.

The case of formality for differential graded Lie algebras is also a well studied subject. It is well known that dg Lie algebras are fundamental objects in the realm of deformation theory: a guiding principle by Deligne states that every deformation problem in characteristic zero is controlled by a dg Lie algebra. In particular, every dg Lie algebra $L$ can be associated to a natural deformation functor. Moreover, if $L$ is assumed to be formal, the deformation functor is simplified considerably.

In \cite{GM}, Goldman and Millson used the same approach of \cite{DGMS} to prove that the dg Lie algebra of exterior differential forms taking values in certain flat vector bundles is formal. A geometric consequence of this result is that the moduli space of certain representations of the fundamental group of a compact K\"ahler manifold has at most quadratic singularities.

Another very famous manifestation of formality is found in the seminal paper \cite{Ko} of Kontsevich. Consider the algebra $A$ of smooth functions on a differentiable manifold, then Kontsevich proves that the dg Lie algebra of the Hochschild cochain complex of $A$ (with Hochschild differential and Gerstenhaber bracket) is formal. A corollary of this result shows that every finite-dimensional Poisson manifold can be canonically quantized in the sense of deformation quantization.

More recently, Kaledin wrote the short and dense paper \cite{Ka}, where he addressed the question of finding cohomological obstructions to formalities for \emph{families} of associative dg algebras. A concise exposition of Kaledin's work can be found in the paper \cite{Lu}, in which Lunts provides proper foundations and complete proofs for the main results of \cite{Ka}. Kaledin's formality criteria were later used in \cite{KL} and in \cite{Zh} to attack some particular cases of a conjecture of Kaledin and Lehn about the singularities of the moduli space of semistable sheaves on K3 surfaces. 

Given an associative dg algebra $A$, Kaledin's arguments are based on the construction of a certain Hochschild cohomology class of $A$ (called \emph{Kaledin class} in \cite{Lu}). Kaledin is then able to prove that this class measures the obstruction to formality of $A$. In fact, one of the main results in \cite{Ka} states that the algebra $A$ is formal if and only if its Kaledin class vanishes.
It is worth noting that similar results were obtained for commutative dg algebras by Sullivan in \cite{Su} and by Halperin-Stasheff in \cite{HS}.

Inspired by \cite{Ka} and \cite{Lu}, Manetti introduced a different flavour of formality criteria for dg Lie algebras in \cite{Ma}. Manetti's approach differs from Kaledin's in that the main tool in \cite{Ma} is the Chevalley-Eilenberg spectral sequence of a given dg Lie algebra $L$, which by definition is simply the natural spectral sequence computing the classical Chevalley-Eilenberg cohomology of $L$. It was already known that homotopy abelianity of a dg Lie algebra $L$ is equivalent to the degeneration at $E_1$ of the Chevalley-Eilenberg spectral sequence of $L$ (see \cite{Ba}). The main result of \cite{Ma} extends this by proving that formality of $L$ is equivalent to the degeneration at $E_2$ of the Chevalley-Eilenberg spectral sequence.

In view of these formality criteria for different cases, the purpose of the present paper is to generalize formality criteria of both \cite{Ka} and \cite{Ma} to the general case of algebras over a sufficiently nice Koszul operad $\cP$. In particular, we want $\cP$ to be weight graded and thus we will always assume $\cP$ to be reduced. For example, $\cP$ can be any of the usual suspects \textit{Ass, Lie, Com, PreLie, Pois} etc.
In the same fashion as \cite{Lu} and \cite{Ma}, we found it more convenient to use the language of homotopy $\cP$-algebras, or $\cP_\infty$-algebras. For this purpose, we use the Koszul resolutions described in \cite{GK} and \cite{LV}.

For every $\cP$-algebra $A$, we give an elementary construction of a natural class $K_A$ in the $\cP$-operadic cohomology of $A$, which we call \emph{operadic Kaledin class}, in analogy with the choice of terminology of \cite{Lu}. We remark that our construction avoids some of the technicalities of \cite{Ka} and \cite{Lu}, as we don't need to pass through deformations to the normal cone. As expected, the operadic Kaledin class controls formality of $A$. Our first main result can therefore be stated as follows.
\begin{thm*}[see Theorem \ref{thm:formality1}]
	The $\cP$-algebra $A$ is formal if and only if its operadic Kaledin class $K_A$ vanishes.
\end{thm*}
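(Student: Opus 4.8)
The plan is to reduce the statement to a property of a one-parameter family of $\cP_\infty$-structures on the cohomology $H(A)$, and then to run a $\mathbb{G}_m$-equivariant obstruction argument in the spirit of \cite{Ka} and \cite{Lu}, now in the operadic framework recalled above. Concretely, I would start from the homotopy transfer theorem along the Koszul resolution $\cP_\infty \to \cP$ of \cite{GK,LV}: the $\cP$-algebra $A$ transfers to a $\cP_\infty$-structure on $H(A)$ with vanishing internal differential that is $\infty$-quasi-isomorphic to $A$. Such a structure is a Maurer--Cartan element $\mu$ in the convolution algebra $\g = \Hom(\cP^{\antishriek},\End_{H(A)})$, and the weight grading of $\cP^{\antishriek}$ — available exactly because $\cP$ is reduced, hence weight graded — splits it as $\mu = \sum_{w\ge 2}\mu_w$, with $\mu_2$ the intrinsic $\cP$-algebra structure on $H(A)$. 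Since every differential in sight vanishes, $A$ is formal if and only if $\mu$ is $\infty$-isomorphic to $\mu_2$ (uniqueness of minimal $\cP_\infty$-models). The weight grading also produces a $\mathbb{G}_m$-action on $\g$, and rescaling gives the \emph{Kaledin family} $\mu^{(t)} := \sum_{w\ge 2} t^{w-2}\mu_w$, a $k[t]$-linear Maurer--Cartan element with $\mu^{(0)} = \mu_2$ and $\mu^{(1)} = \mu$, for which the grading automorphism gives $\mu^{(t)} \cong \mu$ whenever $t$ is invertible; by its construction above, the operadic Kaledin class $K_A \in H^{*}_\cP(H(A),H(A))$ is the Kodaira--Spencer class of this family at the origin, equivalently the obstruction to lifting the tautological identification $\mu^{(t)}\equiv\mu_2 \pmod t$ to an $\infty$-isomorphism modulo $t^2$.

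For the implication $K_A = 0 \Rightarrow A$ formal, the point to establish is that a $\mathbb{G}_m$-equivariant family over $k[t]$ whose first-order part is trivial is globally trivial. I would argue by induction: given a $k[t]/(t^n)$-linear $\infty$-isomorphism $\mu^{(t)}\cong\mu_2$, the obstruction to extending it modulo $t^{n+1}$ is a cocycle in the operadic cochain complex of $H(A)$ whose class lies in a single weight-homogeneous summand of $H^{*}_\cP(H(A),H(A))$; $\mathbb{G}_m$-equivariance forces this class to be a consequence of the obstruction at the previous stage, so it vanishes as soon as the base case $K_A = 0$ holds. Arity-wise polynomiality in $t$ — boundedness of the weights occurring in each arity of $\cP^{\antishriek}$ — removes any convergence issue, so the resulting $k[t]$-linear $\infty$-isomorphism $\mu^{(t)}\cong\mu_2$ is honest; specializing at $t=1$ gives $\mu\cong\mu_2$, whence $A \simeq (H(A),\mu)\simeq(H(A),\mu_2)=H(A)$ and $A$ is formal.

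For the converse, if $A$ is formal I would fix an $\infty$-isomorphism $\psi\colon (H(A),\mu)\xrightarrow{\ \sim\ }(H(A),\mu_2)$ with invertible linear part (minimal models of $A$ are unique up to $\infty$-isomorphism), decompose it by weight as $\psi = \sum_{w\ge 1}\psi_w$, and set $\psi^{(t)} := \sum_{w\ge 1} t^{w-1}\psi_w$. The relations expressing that $\psi$ is an $\infty$-morphism from $\mu$ to $\mu_2$ are weight-homogeneous term by term, so the same $t$-rescaling turns them into the relations expressing that $\psi^{(t)}$ is an $\infty$-morphism from $\mu^{(t)}$ to $\mu^{(0)}=\mu_2$ over $k[t]$; since $\psi^{(t)}$ still has invertible linear part, it trivializes the Kaledin family, so in particular its first-order part is trivial and $K_A = 0$ (equivalently, $K_A$ is sent to $0$ by the isomorphism $\psi$ induces on operadic cohomology).

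The main obstacle is the equivariant obstruction calculus of the second paragraph: setting up the deformation theory of $\infty$-isomorphisms of $\cP_\infty$-algebras so that the controlling complex is the operadic cochain complex and obstructions live in $H^{*}_\cP(H(A),H(A))$, and then checking that the $\mathbb{G}_m$-weight carried by the $n$-th obstruction is precisely the one that lets equivariance pin it down in terms of $K_A$. This is the operadic counterpart of the key lemma of \cite{Ka} (given a detailed treatment in \cite{Lu} for associative algebras), and it is where Koszulness together with reducedness of $\cP$ — hence the weight grading, hence genuine $\mathbb{G}_m$-equivariance of the Kaledin family — is essential. It may be convenient to prove in parallel the reformulation in terms of the operadic cohomology spectral sequence, since degeneration at $E_2$ repackages exactly this vanishing and makes the induction more transparent.
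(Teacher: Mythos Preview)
Your route is the Kaledin--Lunts normal-cone argument, which the paper deliberately sidesteps. The paper's proof is elementary and self-contained: with $K_A$ defined directly as the class of $\widetilde{Q} = \sum_{i\geq 2}(i-1)q_i$ in $H^1(F^1\Coder(\cC(A)),[Q,-])$, Proposition~\ref{prop:n-formality} shows by a short induction that $K_A^{\leq n}=0$ lets one find $\tau$ of weight $n-1$ with $[q_1,\tau]=q_n$, and conjugating by $e^{\tau}$ kills $q_n$ without disturbing lower weights; the infinite composite of such exponentials then furnishes the $\infty$-isomorphism to the strict model $(A,q_1)$. No one-parameter family, no $\mathbb{G}_m$-action, and none of the obstruction calculus for $\infty$-isomorphisms that you flag as ``the main obstacle'' is used. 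Your approach is viable and closer in spirit to the original \cite{Ka}, \cite{Lu}; the paper's buys simplicity.

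That said, there is a genuine error in your identification of $K_A$ that would derail the argument as written. You describe $K_A$ as ``the Kodaira--Spencer class of this family at the origin, equivalently the obstruction to lifting $\mu^{(t)}\equiv\mu_2\pmod t$ to an $\infty$-isomorphism modulo $t^2$''. That first-order obstruction is only the class of $\mu_3$ with respect to $[\mu_2,-]$, which is the truncation $K_A^{\leq 2}$; the paper's $K_A$ is computed with the \emph{full} differential $[Q,-]$ on $F^1\Coder(\cC(A))$, and its vanishing packages the vanishing of every $K_A^{\leq n}$. Accordingly, your headline claim --- that a $\mathbb{G}_m$-equivariant family whose first-order part is trivial is globally trivial --- is false: take a minimal model with $\mu_3=0$ and $\mu_4$ not a $[\mu_2,-]$-coboundary; the first-order part of your family is trivial, yet $K_A^{\leq 3}\neq 0$ and the algebra is not formal. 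What you actually need is that the class of $\partial_t\mu^{(t)}$ vanishes over each $k[t]/(t^{n})$, and the paper verifies in the associative case that this is exactly $K_A^{\leq n}=0$. With that correction your induction unwinds to the same computation as Proposition~\ref{prop:n-formality}, merely rewritten over $k[t]/(t^n)$ instead of in $F^1\Coder(\cC(A))/F^{n+1}$.
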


Notice that in the case where $\cP$ is the operad $\Ass$ of associative algebras, our result gives back the Kaledin formality criterion.

We then pass to the study of the natural spectral sequence computing $\cP$-operadic cohomology of the algebra $A$. Generalizing ideas from \cite{Ma}, we construct a distinguished element $e_A$ in the second page $E_2$ of the operadic spectral sequence of the $\cP$-algebra $A$, which we call \emph{operadic Euler class} following the notations in \cite{Ma}. We are then able to link the vanishing of the operadic Kaledin class $K_A$ to the operadic Euler class $e_A$. More precisely, we show the following statement.
\begin{thm*}[see Theorem \ref{thm:kaledin-euler}]
	The Kaledin class $K_A$ vanishes if and only if $d_r(e_A)=0$ for every $r \geq 2$, where $d_r$ denotes the differential in the $r$-th page of the operadic spectral sequence on $A$.
\end{thm*}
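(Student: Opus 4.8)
The plan is to translate both conditions into the solvability of one and the same lifting problem for the operadic spectral sequence, and then to run the corresponding obstruction theory.

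First recall the set-up. The operadic spectral sequence of $A$ is the one attached to the weight filtration of the operadic cochain complex $C^\bullet_\cP(A)$ computing $H^\bullet_\cP(A)$; writing its differential as $\partial=\partial_0+\partial_1$, with $\partial_0$ induced by the internal differential of $A$ (of weight $0$) and $\partial_1$ by the $\cP$-structure of $A$ (of weight $1$), one has $E_1=C^\bullet_\cP(H(A))$ with its internal differential and $E_2=H^\bullet_\cP(H(A))$, the page on which $e_A$ is the class of the Euler derivation $\varepsilon$ of the graded $\cP$-algebra $H(A)$. By the very definition of the higher differentials of a filtered complex, the statement ``$d_r(e_A)=0$ for all $r\ge 2$'' is equivalent to saying that $\varepsilon$ admits a filtration-compatible lift to a cocycle $\widetilde\varepsilon=\widetilde\varepsilon_0+\widetilde\varepsilon_1+\cdots$ of $C^\bullet_\cP(A)$, constructed inductively so that the weight-$r$ component of $\partial\widetilde\varepsilon$ is killed at each stage; the obstruction to passing from weight $r-1$ to weight $r$ is exactly $d_r(e_A)$ on the relevant subquotient. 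So the first step is this routine reformulation of the right-hand side.

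The heart of the matter is then a chain-level comparison identifying this lifting problem with the one controlling $K_A$. Passing to a minimal model $(H(A),\mu_\infty)$ with $\mu_\infty=\sum_{w\ge 0}\mu^{(w)}$ decomposed by weight ($\mu^{(0)}$ the induced strict structure), one has the key identity
\[
[\mu_\infty,\varepsilon]\;=\;\sum_{w\ge 1} w\,\mu^{(w)},
\]
since $\mu^{(w)}$ is homogeneous of internal degree $-w$ in the normalization of the Koszul resolution used here, so that $[\mu^{(w)},\varepsilon]=w\,\mu^{(w)}$. Thus the coboundary $\partial_{\mu_\infty}\varepsilon$ is precisely the cocycle generating $K_A$, the one arising as the infinitesimal generator of the $\mathbb{G}_m$-rescaling $\mu_\infty^\lambda=\sum_w\lambda^w\mu^{(w)}$: straightening $\mu_\infty$ weight by weight — equivalently, trivializing the family $\mu_\infty^\lambda$ over $k[\lambda]$ — is the same as peeling off $\sum_{w\ge 1}w\,\mu^{(w)}$ from $\varepsilon$ one weight at a time, and the successive obstructions to doing so are the classes on the pages of the spectral sequence whose totality is the vanishing of $K_A$ (this uses the construction of $K_A$ together with Theorem~\ref{thm:formality1}). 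Concretely, a correction $\widetilde\varepsilon_r$ solving the weight-$r$ equation exists exactly when $d_r(e_A)=0$ on the relevant subquotient, and a full set of compatible corrections produces simultaneously the desired lift of $e_A$ and a primitive exhibiting $K_A=0$. Reading the argument backwards, a primitive of the Kaledin cocycle supplies, through its weight-graded pieces, compatible corrections forcing every $d_r(e_A)$ to vanish.

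The main obstacle is this chain-level dictionary. One must check that the integer weights $w$ in $[\mu_\infty,\varepsilon]=\sum_w w\,\mu^{(w)}$ — invertible in the base field, which is where characteristic zero enters — match the way the differentials $d_r$ accumulate weight, keeping careful track of the signs coming from the convolution Lie bracket; one must verify that the corrections $\widetilde\varepsilon_r$ can be chosen coherently, i.e. that an obstruction vanishing on page $r$ genuinely allows the passage to page $r+1$, which is the familiar point that $d_{r+1}$ is defined only on $\ker d_r$; and one must confirm that the transferred operations $\mu^{(w)}$ carry the normalization under which $\varepsilon$ detects all of them, so that nothing is lost in passing between $A$ and its minimal model. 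Once this bookkeeping is in place the two conditions coincide on the nose, and the theorem follows.
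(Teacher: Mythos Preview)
Your argument is correct and hinges on the same identity as the paper, namely $[Q,e_A]=\widetilde{Q}$ (Lemma~\ref{lem:eulerbracket}); once this is known, lifting $e_A$ to a cocycle $e_A+T$ with $T\in F^1$ is literally the same as exhibiting $-T$ as a primitive for $\widetilde{Q}$ in $F^1$, and that is precisely your dictionary. For the implication $d_r(e_A)=0\Rightarrow K_A=0$ the paper does exactly this, only spelling out the successive lifts $t^i_j$ page by page before reassembling them into a single primitive $R$. The one substantive difference is the converse: you argue symmetrically (a primitive $T$ makes $e_A-T$ an honest cocycle, so $e_A$ survives every page), whereas the paper instead detours through Proposition~\ref{prop:n-formality} to pass to an isomorphic model with $q_2=\cdots=q_n=0$ and then shows that \emph{all} differentials $d_r$ vanish for $2\le r\le n$, not only on $e_A$; this stronger conclusion is what later feeds into Corollary~\ref{cor:formality2}(3). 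Two small remarks on your write-up: the appeal to Theorem~\ref{thm:formality1} and the ``straightening $\mu_\infty$'' picture is unnecessary---the argument runs directly from the \emph{definition} of $K_A$ as the class of $\widetilde{Q}$---and the invertibility of the weights (``where characteristic zero enters'') is not actually used in this theorem; characteristic zero enters only via Proposition~\ref{prop:n-formality}, which your route bypasses. (Also a minor convention mismatch: in the paper every $q_i$ has cohomological degree $+1$, so Lemma~\ref{lem:eulerbracket} gives $[q_i,e_A]=(i-1)q_i$; your ``internal degree $-w$'' bookkeeping is different but lands on the equivalent formula.)
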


As an immediate corollary of both theorems, we find the following formality criteria for the $\cP$-algebra $A$.

\begin{cor*}
	Let $A$ be a $\cP$-algebra. The following are equivalent:
	\begin{enumerate}
		\item $A$ is formal;
		\item the operadic Kaledin class $K_A$ is zero;
		\item $d_r(e_A)=0$ for every $r \geq 2$;
		\item the operadic spectral sequence of $A$ degenerates at $E_2$.
	\end{enumerate}
	
\end{cor*}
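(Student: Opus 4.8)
The plan is to deduce the four-fold equivalence from the two theorems already at our disposal together with one extra implication that introduces the degeneration statement~(4). Indeed, (1)$\Leftrightarrow$(2) is exactly Theorem~\ref{thm:formality1}, and (2)$\Leftrightarrow$(3) is Theorem~\ref{thm:kaledin-euler}, so conditions (1), (2), (3) are already mutually equivalent and the task reduces to splicing (4) into the cycle. One implication is free: if the operadic spectral sequence of $A$ degenerates at $E_2$, then $d_r$ vanishes on the whole page $E_r$ for every $r\geq 2$, hence in particular $d_r(e_A)=0$, which is (3); this gives (4)$\Rightarrow$(3). It therefore suffices to prove (1)$\Rightarrow$(4), after which the loop $(1)\Rightarrow(4)\Rightarrow(3)\Rightarrow(2)\Rightarrow(1)$ is closed.

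To prove (1)$\Rightarrow$(4), recall first that the operadic cohomology spectral sequence is a homotopy invariant: it is the spectral sequence of the weight filtration on the operadic (convolution) cochain complex, and an $\infty$-quasi-isomorphism of $\cP_\infty$-algebras respects this filtration and induces a quasi-isomorphism on the associated graded, hence an isomorphism of spectral sequences from $E_1$ onwards. If $A$ is formal we may therefore compute the spectral sequence on $H(A)$, equipped with zero internal differential and with its induced \emph{strict} $\cP$-structure. But for such an algebra the operadic cochain differential has no component coming from an internal differential and no components coming from higher $\cP_\infty$-operations, so it is homogeneous of weight-degree exactly one; consequently $E_0=E_1$, the page $E_2$ is the genuine operadic cohomology $H^{\ast}_{\cP}(H(A))$, and there is simply no room for a differential $d_r$ with $r\geq 2$. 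Thus the spectral sequence degenerates at $E_2$, which is (4). (We use here that the spectral sequence converges, which holds because $\cP$ is reduced and weight graded, so that the weight filtration is exhaustive and bounded in each total degree.) Assembling Theorem~\ref{thm:formality1}, Theorem~\ref{thm:kaledin-euler}, the implication just proved, and the trivial (4)$\Rightarrow$(3) yields the equivalence of all four conditions.

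I expect the main obstacle to lie in the implication (1)$\Rightarrow$(4), and more precisely in the bookkeeping that identifies the pages of the operadic spectral sequence: one must be sure that the $E_2$-page really is the naive operadic cohomology of $H(A)$ and that the higher differentials $d_r$ are controlled by the transferred $\cP_\infty$-operations, so that their vanishing is equivalent to $A$ being $\infty$-quasi-isomorphic to a strict $\cP$-algebra. An alternative route to $(3)\Rightarrow(4)$, closer in spirit to \cite{Ma}, would instead exploit the fact that the operadic Euler class $e_A$ generates the higher differentials through the module structure of the spectral sequence, forcing $d_r\equiv 0$ as soon as $d_r(e_A)=0$. Either way, the homotopy invariance and convergence of the operadic spectral sequence are the structural facts on which the argument rests, and these should be available from the Koszul-duality machinery of \cite{GK} and \cite{LV} developed earlier in the paper.
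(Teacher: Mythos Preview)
Your overall strategy matches the paper's: (1)$\Leftrightarrow$(2) from Theorem~\ref{thm:formality1}, (2)$\Leftrightarrow$(3) from Theorem~\ref{thm:kaledin-euler}, (4)$\Rightarrow$(3) trivial, and then (1)$\Rightarrow$(4) to close the loop. The paper argues (1)$\Rightarrow$(4) in exactly the same spirit as you do---reduce to a minimal strict $\cP$-algebra, where the differential on $\Coder(\cC(A))$ is $[q_1,-]$ and hence homogeneous of weight~$1$---but it packages your ``no room for $d_r$'' assertion as a short explicit lemma (Lemma~\ref{lem:ssdeg}): if $Q$ is concentrated in weight~$k$ and $r>k$, then for $z\in Z^p_r$ the weight decomposition of $dz$ forces $dz\in dZ^{p+1}_{r-1}$, so $d_r=0$. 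Your phrase ``no room'' is the right intuition but does need that one line of justification, since representatives in $Z^p_r$ are not of pure weight~$p$; once you observe that the grading (not just the filtration) lets you peel off the lowest-weight piece, the argument is complete. Your parenthetical about convergence is unnecessary: degeneration at $E_2$ is a statement about the vanishing of $d_r$ for $r\ge 2$ and does not require the spectral sequence to converge to anything.
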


Moreover, if we take $\cP=\Lie$ to be the operad of Lie algebras, then the equivalence of items $(1)-(3)-(4)$ is the main result of \cite{Ma}. We remark however that the formality criterion of item $(2)$ appears to be new even in the case of dg Lie algebras.

\

The paper is structured as follows. The first section is devoted to briefly recalling the theory of Koszul (co)operads and homotopy $\cP$-algebras, following the exposition in \cite{LV}.

In Section 2, we define the fundamental notions of minimal and formal $\cP_\infty$-algebras. Moreover, we construct our main tool for the study of formality, which is the operadic Kaledin class. We then obtain our first formality criterion, which is Theorem \ref{thm:formality1}. As mentioned, this generalizes results of \cite{Ka} and \cite{Lu}.

The goal of Section 3 is to extend some constructions and ideas from \cite{Ma} to our more general setting. We start by defining a natural operadic cohomology spectral sequence, which specializes to the Chevalley-Eilenberg spectral sequence whenever $\cP = \Lie$. Then, we define the operadic Euler class: a canonical element of the second page of the operadic spectral sequence. With these new tools, we are able to formulate a different formality criterion, generalizing a theorem of \cite{Ma}. 

In Section 4 we relate the operadic Kaledin class with the operadic Euler class. Then, we summarize our results in Corollary \ref{cor:formality2} in the form of a formality criterion.

In \cite{Ma}, Manetti also includes a very interesting result on formality transfers. In the same way, Section 5 is devoted to an adaptation of the formality transfer theorem in our setting.

In the final section, we treat in more detail the cases where $\cP$ is the operad $\Ass$ or $\Lie$. We explicitly show how one can get back known results of Kaledin and Manetti from Corollary \ref{cor:formality2}. 

\subsection*{Future directions}

We list here some of the topics we chose not to treat in the present version of the paper. We plan to come back to these questions in future works.

As already explained, Manetti studies formality criteria of dg Lie algebra using degenerations of the Chevalley-Eilenberg spectral sequence in \cite{Ma}. This is much in the spirit of \cite{Ba}, where Bandiera shows that the Chevalley-Eilenberg spectral sequence also detects homotopy abelianity. We certainly feel like the arguments of \cite{Ba} can be smoothly extended to prove a homotopy triviality criterion for algebras over a nice enough operad $\cP$.

While studying the uniqueness of $A_\infty$-structures, Kadeishvili provided an intrinsic formality criterion for $A_\infty$-algebras \cite{Kad}. One says that an $A_\infty$-algebra $A$ is \emph{intrinsically formal} if any other $A_\infty$-algebra $A'$ such that $H(A) \cong H(A')$ as $A_\infty$-algebras is quasi-isomorphic to $A$. A generalization to $\cP$-algebras should follow similarly to the results presented in this paper. We thank Andreas Hochenegger for pointing this result to us.

Another direction arises from the work of Kaledin \cite{Ka}, which deals with formality for families of associative dg algebras. Algebraically, the use of families corresponds to working over a non-trivial base, as it is explained in \cite{Lu}. In our opinion, it would be interesting to extend Kaledin's criterion for formality in families to algebras over general operads. We hope that the methods developed in this paper can be useful for this purpose.

Finally, formality has important consequences in deformation theory. From a more geometric point of view, the formality of a dg Lie algebra implies that the associated formal moduli problem has at most quadratic singularities. It would be interesting to study how different formalities impact the associated geometrical object. For example, pre-Lie formality could possibly have nice consequences in the context of pre-Lie deformation theory, as developed in \cite{DSV}.

\subsection*{Acknowledgements}

The idea of this paper emerged after both authors participated at the summer school in Deformation Theory held in Turin in 2017: we are thus thankful to the organizers. Moreover, we are grateful to M. Lehn and M. Manetti, who gave very inspiring mini-courses on formality criteria for associative and Lie algebras in Turin. We want to thank Z. Zhang for the helpful discussions on Kaledin's results. The first author also thanks E. Martinengo for answering further questions about Kaledin's work and its applications. Finally, we thank Dan Petersen, Andreas Hochenegger and the anonymous referee for their useful comments.

The first author has been partially supported by FIRB2012 "Moduli spaces and applications". The second author was sponsored by Nero Budur's research project G0B2115N from the Research Foundation of Flanders.

\section{Recollections on operads and homotopy algebras}\label{sect:preliminaries}

This section serves as a background to our results as well as fixing notation; however, we assume familiarity with chapters 5 to 7 in \cite{LV}.

\subsection{Notations and conventions}

We will work over a field $\bK$ of characteristic $0$.

The category of complexes of $\bK$-modules will be denoted $\dgMod$, and its objects will simply be called dg modules. We will adopt the cohomological point of view, and say that the differential of a dg module $V \in \dgMod$ is of degree $+1$. If $V \in \dgMod$ is a dg module, and $x \in V$ is a homogeneous element, we will denote by $\overline{x}$ the cohomological degree of $x$.  

\subsection{Operads and cooperads}\label{sect:operads}

In this paper we are interested in formality criteria for algebras over dg Koszul operads. These are quadratic operads under some acyclicity conditions (see \cite[Theorem 7.4.2]{LV}). We briefly recall some properties of these operads. 

We start with $\cP(E,R)$ being a \emph{quadratic operad}; i.e. an operad generated by operations of arity $2$ associated to the quadratic data $(E,R)$ (see \cite[Section 7.1]{LV}). An important feature of quadratic (co)operads is that they can be endowed with a natural weight grading (see \cite[Sections 7.1.2 and 7.1.3]{LV}). Weight grading becomes important in the next sections, especially in relation to spectral sequences.

More specifically, a weight grading on a (co)operad $\cP$ ($\cC$) is a decomposition
\[\cP = \bK\id \oplus \cP^{(1)}\oplus\dots \oplus \cP^{(w)}\oplus \dots,\]
where both the differential and the (de)composition map are compatible with the total weight. Furthermore, morphisms between such (co)operads also preserve the weight.

\begin{remark}
An operad $\cP$ such that $\cP(0)=0$ is called \emph{reduced}. Such operads have the following canonical weight grading:
\[\cP^{(k)}:=\cP(k+1).\]
This weight grading can be carried over to $\cP$-algebras. In particular, $\cP(V)$ can be written as a decomposition $\cP(V)^{(w)}:= \cP^{(w)}\circ (V,0, 0, \dots)$, see \cite[Section 5.7.1]{LV} for more details. The same holds for reduced cooperads and their coalgebras.

\end{remark}

As mentioned in the introduction, all the operads that we consider are reduced.

\subsection{Homotopy \texorpdfstring{$\cP$}~-algebras}

Let $\cP$ be a dg Koszul operad as in \cite[Section 7.4.3]{LV}. Moreover, we will suppose that the generating operations of $\cP$ are of cohomological degree $0$, as this simplifies some of our arguments. Now, consider the Koszul dual cooperad $\cC=\cP^\antishriek$ of $\cP$, as described in \cite[Section 7.2.1]{LV}. The cooperad $\cC$ plays a fundamental role in the theory of homotopy algebras.

\begin{defi}
A \emph{homotopy $\cP$-algebra} is an algebra over the Koszul dual operad $\Omega\cC$.
\end{defi}

Alternatively, homotopy $\cP$-algebras will also be called \emph{$\cP_\infty$-algebras}, where $\cP_\infty$ stands for the canonical resolution $\Omega\cC$ of $\cP$. 

Let now $A \in \dgMod$ be a dg module.
Then a $\cP_{\infty}$-structure on $A$ is the same as a codifferential of degree $+1$ on the cofree $\cC$-coalgebra $\cC(A)$ cogenerated by $A[1]$ (see \cite[Section 10.1.8]{LV}). For example, for $\cP=Lie$, a $Lie_\infty$ (or just $L_\infty$) algebra structure on $A$ is the datum of a degree $+1$ coderivation $Q$ on the cofree coassociative coalgebra cogenerated by $A[1]$.

Note that $\cC(A)$ is cofree as a $\cC$-coalgebra, and thus we have the following isomorphism by \cite[Proposition 6.3.8]{LV}:
\[ \Coder(\cC(A)) \simeq \Hom(\cC(A), A),\]
where the right hand side is the internal hom of complexes. Now, given that $\Coder(\cC(A))$ can be seen as a graded Lie algebra, we can endow $\Hom(\cC(A), A)$ with a graded Lie bracket. In the case of $\cP = \Lie$, this is commonly known as Nijenhuis-Richardson bracket.

An important observation here is that $\cC(A)$ is a \emph{graded complex}: by this we mean that it has an \emph{internal} grading, coming from the fact that $A$ has a cohomological grading itself, but also an \emph{external} grading, coming from the fact that it is a cofree coalgebra. To avoid confusion, we will refer to this external grading by calling it \emph{weight} (as it is induced by the canonical weight grading on the cooperad $\cC$), while the internal grading will be called \emph{degree}. Finally, the weight grading on $\cC(A)$ automatically gives an additional grading on $\Hom(\cC(A),A)$.

The induced weight grading on $\Coder(\cC(A))$ is compatible with the Lie bracket of coderivations. More specifically, the Lie bracket can be seen as a morphism of graded complexes
\[ \Coder(\cC(A)) \otimes \Coder(\cC(A)) \to \Coder(\cC(A)). \]

Consider now a codifferential $Q \in \Coder(\cC(A))$, that is to say a coderivation of cohomological degree 1 which squares to zero. With such $Q$, the dg module $A$ becomes a $\cP_\infty$-algebra, which will be denoted by $(A,Q)$. Since $\cC^{(0)}=I$, the weight zero component of $Q$ must be zero itself, and thus $Q$ can be thought of as an infinite sum
\[ Q = q_1 + q_2 + \dots  \]
where $q_i$ has weight $i$.

The relevance of the weight grading on $\Coder(\cC(A))$ can be seen in the following result.

\begin{prop}
Let $(A,Q)$ be a $\cP_\infty$-algebra. Then the following are equivalent:
\begin{enumerate}
\item the algebra $(A,Q)$ is a strict $\cP$-algebra;
\item the codifferential $Q$ is concentrated in weight 1.
\end{enumerate}
\end{prop}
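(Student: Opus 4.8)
The plan is to unravel both conditions in terms of the decomposition $Q = q_1 + q_2 + \dots$ into weight-homogeneous pieces and match them against the defining data of a strict $\cP$-algebra structure. First I would recall that, under the isomorphism $\Coder(\cC(A)) \simeq \Hom_\bS(\cC, \End_A) = \bigoplus_{n\geq 0} \Hom_\bS(\cC^{(n)}, \End_A)$, the codifferential $Q$ corresponds to a family $(\ell_n)_{n\geq 1}$ with $\ell_n \in \Hom_\bS(\cC^{(n)}, \End_A)$ (the weight-zero component vanishing since $\cC^{(0)} = I$ and $Q$ has no constant term). The key structural fact I would invoke is that $\cC = \cP^\antishriek$ is weight-graded with $\cC^{(1)} = sE$, so $\Hom_\bS(\cC^{(1)}, \End_A) \cong \Hom_\bS(E, \End_A)$, and this is precisely the space of candidate $\cP$-algebra structure maps (a degree-$0$ $\bS$-module map $E \to \End_A$ extends uniquely to $\cF(E) \to \End_A$ by freeness).

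The forward direction $(1)\Rightarrow(2)$: if $(A,Q)$ is strict, the $\cP$-algebra structure is by definition an operad morphism $\cP \to \End_A$, equivalently (by the universal property of the cobar construction / the morphism $\Omega\cC \to \cP$) an $\Omega\cC$-algebra structure factoring through $\cP$. Concretely, the $\cP_\infty$-structure coming from a strict $\cP$-algebra is the one where the twisting morphism $\cC \to \End_A$ is the composite $\cC \twoheadrightarrow \cC^{(1)} = sE \xrightarrow{s^{-1}} E \to \End_A$, which is supported entirely in weight $1$. Hence $q_i = 0$ for $i \geq 2$, i.e.\ $Q = q_1$ is concentrated in weight $1$.

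For $(2)\Rightarrow(1)$: suppose $Q = q_1$. Then the Maurer--Cartan equation $Q^2 = 0$, decomposed by weight, collapses: since the Lie bracket on $\Coder(\cC(A))$ is compatible with the weight grading, $[Q,Q]$ sits in weight $2$ and equals $[q_1, q_1]$, so $Q^2 = 0$ becomes a single quadratic relation on $q_1$. This relation, transported through $\cC^{(2)} = (0, R(1), R(2), \dots)$, says exactly that the induced map $E \to \End_A$ kills the relations $R$, i.e.\ descends to an operad morphism $\cP(E,R) = \cP \to \End_A$. (Here I would use that $\cC^{(2)}$ is built from $R$ and that the infinitesimal decomposition $\Delta_\cC$ restricted to weight $2$ encodes precisely the quadratic relations, dualizing the description of the cobar differential $d_2$ given in the excerpt.) Conversely, any strict $\cP$-algebra gives such a $q_1$, so the two notions coincide.

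I expect the main obstacle to be making precise the claim that the weight-$2$ part of $Q^2 = 0$ is equivalent to the map $E \to \End_A$ factoring through $R$ — this requires carefully identifying $\cC^{(2)}$ with $(0, R(1), R(2), \dots)$ (as recalled after Definition \ref{defi:koszuldual}) and tracing the sign-laden formula for the cobar/bar differential $d_2$ through the isomorphism $\Coder(\cC(A)) \simeq \Hom_\bS(\cC, \End_A)$. Everything else is bookkeeping with the weight grading, which is compatible with both the differential and the bracket by construction. An alternative, cleaner route that sidesteps the explicit computation: invoke the general equivalence (e.g.\ \cite[Section 10.1]{LV}) between $\cP_\infty$-structures on $A$ and twisting morphisms $\cC \to \End_A$, under which a twisting morphism is \emph{admissible} (comes from a strict $\cP$-algebra) if and only if it factors through $\kappa: \cC \twoheadrightarrow sE \to \cP$; since $\kappa$ is supported in weight $1$, this is equivalent to the corresponding $Q$ being concentrated in weight $1$. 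I would present the weight-grading argument as the main line and mention this as the conceptual explanation.
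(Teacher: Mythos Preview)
Your proposal is correct. The paper's own proof is a one-line citation: ``This is an immediate consequence of \cite[Proposition 10.1.7]{LV}.'' That is essentially the ``alternative, cleaner route'' you mention at the end --- invoking the equivalence between $\cP_\infty$-structures and twisting morphisms $\cC \to \End_A$, under which strict $\cP$-algebras correspond to morphisms supported in weight $1$. Your main line instead unpacks this equivalence by hand: identifying $\cC^{(1)}$ with $sE$ and $\cC^{(2)}$ with the relations $R$, and reading off that $Q^2=0$ in weight $2$ is exactly the vanishing on $R$. This buys a self-contained argument that makes the role of the quadratic data $(E,R)$ explicit, at the cost of the sign-and-grading bookkeeping you flag; the paper's citation hides all of that in the reference.
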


\begin{proof}
This is an immediate consequence of \cite[Proposition 10.1.4]{LV}.
\end{proof}

The interpretation of $\cP_\infty$-structures in terms of codifferentials is particularly nicely suited to define $\cP_\infty$-morphisms.

\begin{defi}
A morphism of $\cP_{\infty}$-algebras $f : (A,Q) \to (B,R)$ is a map of $\cC$-coalgebras $\cC(A) \to \cC(B)$ commuting with the differentials $Q$ and $R$.
\end{defi}

Note that $\cP_\infty$-morphisms also have a weight grading decomposition. More specifically, every morphism of $\cC$-coalgebras $\cC(A) \to \cC(B)$ is completely determined by the composite
\[ \cC(A) \to \cC(B) \to B, \]
as $\cC(B)$ is cofree. Let us denote by $\End_B^A$ the $\bS$-module defined by
\[ \End_B^A(n) := \Hom(A^{\otimes n}, B). \]
Then a linear map $\cC(A) \to B$ is equivalent to an element of $\Hom_{\bS}(\cC, \End_B^A)$, and as before the weight grading on $\cC$ induces a decomposition
\[ f=(f_0,f_1,\dots). \] 
In particular, the component $f_0$ gives a map of complexes $A \to B$.

\begin{defi}
We say that a map $f$ of $\cP_{\infty}$-algebras is a quasi-isomorphism if the induced map $f_0:A \to B$ is a quasi-isomorphism of complexes in the usual sense.
\end{defi}

\subsection{Filtrations}\label{sect:filtration}
Let $A$ be a $\cP_\infty$-algebra. In what follows, we will use the natural filtration on $\Coder(\cC(A))$ induced by the weight grading. It is actually convenient to construct it in the slightly more general context of $\cP_\infty$-morphisms, as follows.

Take a $\cP_{\infty}$-morphism $f: (A,Q) \to (B,R)$. This gives a map $\cC(A) \to \cC(B)$ of $\cC$-coalgebras, and by viewing $\cC(A)$ as a $\cC(B)$-comodule we can construct the complex
\[ \Coder(\cC(A), \cC(B); f). \]
The differential in here is induced by the two differentials $Q$ and $R$ on $\cC(A)$ and $\cC(B)$ respectively.
In the case of $\cP=\Lie$, this is precisely the Chevalley-Eilenberg complex of a $L_{\infty}$-map, as constructed in \cite[Definition 5.2]{Ma}.

Since $\cC(B)$ is cofree, every coderivation $\cC(A) \to \cC(B)$ is completely determined by the composition
\[ \cC(A) \to \cC(B) \to B.\]
Moreover, as mentioned there is a weight grading decomposition
\[ \cC(A) = \bigoplus_{i \geq 0} \cC(A)^{(i)} \]
on the cofree coalgebra $\cC(A)$, which in turn induces a decomposition
\[ \Coder(\cC(A), \cC(B);f) = \prod_{i \geq 0} \Coder(\cC(A), \cC(B), f)^{(i)}. \]
Therefore, we define a filtration
\[ F^p\Coder(\cC(A), \cC(B);f) := \prod_{i \geq p} \Coder(\cC(A), \cC(B), f)^{(i)}. \]

In the special case where $f$ is the identity of a $\cP_\infty$-algebra $(A,Q)$, the corresponding filtration on $\Coder(\cC(A))$ will be denoted by $F^p\Coder(\cC(A))$.

\section{Formality of  \texorpdfstring{$\cP_\infty$}--algebras}

In this section we introduce the important notion of \emph{formal} $\cP_\infty$-algebras. Our first main result is that formality is controlled by a certain cohomology class, which we call \emph{operadic Kaledin class}.

\subsection{Minimal and formal homotopy algebras}
We start by the simplest notion of minimality for algebras over operads.

\begin{defi}
Let $\cP$ be any operad, and let $A$ be a $\cP$-algebra. We say that $A$ is \emph{minimal} if its differential is zero.
\end{defi}

Notice that if $(A,Q)$ is a minimal $\cP_\infty$-algebra, and $Q=q_1+q_2+\dots $ is the weight decomposition of $Q$, then $(A,q_1)$ is a minimal $\cP$-algebra. 

\begin{lem}\label{lem:minimal}
Every $\cP_{\infty}$-algebra $(A,Q)$ is quasi-isomorphic to a minimal $\cP_\infty$-algebra.
\end{lem}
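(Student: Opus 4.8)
The plan is to obtain the minimal model by \emph{homotopy transfer} along a contraction onto cohomology. Write $d$ for the internal differential of the dg module $A$, so that the underlying complex of $(A,Q)$ is $(A,d)$, and set $H:=H(A,d)$, viewed as a dg module with zero differential. Since $\bK$ is a field, $H$ is a deformation retract of $(A,d)$: we may choose $\bK$-linear maps $i\colon H\to A$, $p\colon A\to H$ and a degree $-1$ homotopy $h\colon A\to A$ satisfying
\[ pi=\id_H,\qquad \id_A-ip = dh+hd,\qquad ph=0,\ hi=0,\ h^2=0. \]
First I would record that such a side-condition-satisfying contraction always exists over a field.

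\textbf{Transfer.} Next I would feed this contraction into the homotopy transfer theorem for algebras over the quasi-free operad $\Omega\cC=\cP_\infty$ (see \cite[Section 10.3]{LV}); this applies precisely because $\cP$ is Koszul, so that $\cP_\infty$ is a cofibrant resolution of $\cP$. This produces a $\cP_\infty$-structure $Q'$ on $H$ together with an extension of $i$ to a $\cP_\infty$-morphism $i_\infty\colon (H,Q')\to (A,Q)$ whose linear term is $i$. Concretely, $Q'$ and $i_\infty$ are the usual sums over rooted trees with vertices decorated by the weight components of $Q$, leaves by $i$, root by $p$, and internal edges by the homotopy $h$. The point that needs care here is the convergence of these sums: since $\cC=\cP^{\antishriek}$ is connected and weight graded, and $Q=q_1+q_2+\dots$ has no weight-zero component, in each fixed weight of $\cC(H)$ only finitely many trees contribute, so the formulas are well defined. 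This is the only genuinely non-formal input, and is where the weight grading on the Koszul dual cooperad is essential.

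\textbf{Conclusion.} Finally I would check the two required properties. By construction the transferred algebra $(H,Q')$ has vanishing internal differential, hence is minimal in the sense of the definition above. Moreover the linear term of $i_\infty$ is $i\colon H\to (A,d)$, which is a quasi-isomorphism of complexes by the choice of the contraction (its inverse on cohomology being induced by $p$); therefore $i_\infty$ is a quasi-isomorphism of $\cP_\infty$-algebras in the sense defined in Section~2. Thus $(A,Q)$ is quasi-isomorphic to the minimal $\cP_\infty$-algebra $(H,Q')$, which proves the lemma.
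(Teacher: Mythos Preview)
Your proof is correct and follows essentially the same approach as the paper: the paper simply cites \cite[Theorem~10.3.15]{LV}, which is precisely the homotopy transfer statement you unpack in detail (contraction onto cohomology over a field, transferred $\cP_\infty$-structure on $H(A)$, and the resulting $\infty$-quasi-isomorphism). Your version is more explicit about the contraction data, the tree formulas, and the convergence via the weight grading, but the underlying argument is the same.
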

\begin{proof} This is an immediate consequence of \cite[Theorem 10.3.10]{LV}. Alternatively, we know that $A$ is quasi-isomorphic to its cohomology $H(A)$ as dg modules over $\bK$. It follows that there is an induced $\cP_\infty$-structure $Q'$ on $H(A)$ such that $(H(A),Q') \simeq (A,Q)$ as $\cP_\infty$-algebras.
\end{proof}

In veiw of Lemma \ref{lem:minimal}, we deduce following guiding principle: if we are only interested in constructions and properties of $\cP_\infty$-algebras which are invariant under quasi-isomorphisms, then we can safely restrict ourselves to minimal $\cP_\infty$-algebras.

\begin{defi}
A $\cP_{\infty}$-algebra $A$ is said to be \emph{formal} if it is quasi isomorphic to a strict $\cP$-algebra, which is moreover minimal.
\end{defi}

\begin{remark}
Since we are working over a field, Lemma \ref{lem:minimal} tells us that any $\cP_\infty$-algebra $(A,Q)$ is quasi-isomorphic to the $\cP_\infty$-algebra $(H(A),Q')$. On the other hand, the differential on $H(A)$ is of course zero, and thus $H(A)$ is also a strict $\cP$-algebra. However, the quasi-isomorphism between $A$ and $H(A)$ is not compatible with this strict $\cP$-structure.
\end{remark}

We now address the question of finding a cohomological obstruction to formality for $\cP_\infty$-algebras. As a consequence of Lemma \ref{lem:minimal}, we can restrict our attention to minimal $\cP_\infty$-algebras.  

Let thus $(A,Q)$ be a minimal $\cP_\infty$-algebras. In other words, let $Q=q_1+q_2+\dots$ be a Maurer-Cartan element in the graded Lie algebra $\Coder(\cC(A))$. Then in particular we can use $Q$ to define a differential $d_Q := [Q,-]$ on $\Coder(\cC(A))$.
The cohomology of the complex $(\Coder(\cC(A)), d_Q)$ is the \emph{operadic cohomology} of $A$.

\begin{remark}
If for example $\cP=Lie$, we get back the usual Chevalley-Eilenberg cohomology. In other interesting cases such as $\cP= \Ass$, $\cP= \Comm$ or $\cP = \Pois$ we obtain the standard notion of Hochschild cohomology, Harrison cohomology and Poisson cohomology respectively.
\end{remark}

\subsection{Operadic Kaledin class and formality}\label{sect:kaledin}
Again, let $(A,Q)$ be a minimal $\cP_\infty$-algebra, and consider the weight decomposition $Q= q_1 + q_2 + \dots$ of $Q$ inside $\Coder(\cC(A))$. If we define
\[ \widetilde{Q} := q_2 + 2q_3 + 3q_4 + \dots, \]
then a simple check gives us that $[Q, \widetilde{Q}]=0$. Moreover, clearly $\widetilde{Q}$ is a degree 1 element of $F^1\Coder(\cC(A))$, and thus $\widetilde{Q}$ defines a class in $H^1(F^1\Coder(\cC(A)))$, where the cohomology is computed with respect to the differential $d_Q$.

\begin{remark}
The additional grading on the complex $\Coder(\cC(A))$ can be interpreted as a structure of a $k[h,h^{-1}]$-comodule, where $h$ is a formal variable. In this sense, $\widetilde{Q}$ can be thought of as a sort of formal derivative of $Q$ with respect to the formal variable $h$, which controls the weight grading. This is essentially the approach taken in \cite{Ka} and \cite{Lu}.
\end{remark}

\begin{defi}\label{defi:kaledinclass}
The class $K_A \in H^1(F^1(\Coder(\cC(A))))$ defined by $\widetilde{Q}$ is the \emph{operadic Kaledin class} of the $\cP_\infty$-algebra $A$.  
\end{defi}

\begin{remark}\label{rmk:F^1}
Notice that $\widetilde{Q}$ also clearly defines a class in the cohomology of the whole complex $\Coder(\cC(A))$, which includes the weight 0 component. However, this class turns out to be identically zero, as we will show in Section \ref{sect:euler}.
\end{remark}

The $n$-truncation $K_A^{\leq n}$ of $K_A$ is obtained by considering only weight components of weights $\leq n$. More explicitly, the element
\[ \widetilde{Q}^{\leq n} = q_2 + 2q_3 + \dots + (n-1)q_n \]
is a cocycle in the complex $F^1\Coder(\cC(A))/F^{n+1}$, and we set
\[ K_A^{\leq n} = [q_2 + 2q_3 + \dots (n-1)q_n] \in H^1(F^1\Coder(\cC(A))/F^{n+1}).  \]

The importance of the operadic Kaledin class is that it controls formality of $A$.

\begin{prop}\label{prop:n-formality}
Let $(A,Q)$ be a minimal $\cP_\infty$-algebra, and let $K_A$ be its Kaledin class. The following are equivalent:
\begin{enumerate}
\item there exists an isomorphism of minimal $\cP_\infty$-algebras $(A,Q) \to (A,R)$, where $r_1=q_1$ and $r_2=r_3 = \dots = r_n=0$;
\item the truncated class $K^{\leq n}_A$ is zero. 
\end{enumerate}
\end{prop}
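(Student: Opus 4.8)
The plan is to deduce both implications directly, without induction on $n$, from a single ``moving frame'' identity comparing the distinguished cocycles $\widetilde{Q}$ and $\widetilde{R}$ along a $\cP_\infty$-isomorphism. The starting point is a conjugation-theoretic reading of $\widetilde{Q}$. The weight-rescaling automorphism $\phi_t$ of $\cC(A)$ (multiplying $\cC(A)^{(w)}$ by a fixed power of $t$) and the internal-degree-rescaling automorphism $\theta_t$ (multiplying the degree-$d$ part by $t^{d}$) are, for $t\neq 0$, $\cC$-coalgebra automorphisms, since the decomposition map of $\cC(A)$ preserves both weight and internal degree. Their composite $\Psi_t$ can be normalized so that $\Psi_t Q\Psi_t^{-1}=\sum_{w} t^{w-1}q_w$, and therefore
\[ \widetilde{Q} \;=\; \frac{d}{dt}\Big|_{t=1}\Psi_t Q\Psi_t^{-1} \;=\; [\,M,Q\,], \qquad M:=\frac{d}{dt}\Big|_{t=1}\Psi_t, \]
where $M$ is a degree-$0$, weight-$0$ coderivation of $\cC(A)$. (In particular $\widetilde{Q}=-d_QM$ is a coboundary in the full complex $\Coder(\cC(A))$, which is Remark~\ref{rmk:F^1}; the whole point of the Kaledin class is that $M\notin F^1$.)

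Next, for a $\cP_\infty$-isomorphism $f\colon (A,Q)\to (A,R)$ of minimal $\cP_\infty$-algebras one has $R=fQf^{-1}$, so conjugation $c_f(-)=f^{-1}(-)f$ is an isomorphism of complexes $(\Coder(\cC(A)),d_R)\xrightarrow{\ \sim\ }(\Coder(\cC(A)),d_Q)$. Factoring $f=\cC(f_0)\circ f'$ with $f'_0=\id$, the first factor preserves each weight component and the second is unipotent for the weight filtration, so $c_f$ is \emph{filtered}. The key lemma I would prove is the identity
\[ c_f(\widetilde{R}) \;=\; \widetilde{Q}\;-\;d_Q\,\xi_f, \qquad\text{where}\qquad \xi_f:=f^{-1}\circ\widetilde{f}\ \in\ F^1\Coder(\cC(A))^0 \]
and $\widetilde{f}:=\frac{d}{dt}\big|_{t=1}\Psi_t f\Psi_t^{-1}$. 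Here $\widetilde{f}$ is a coderivation \emph{along} $f$ in the sense of Remark~\ref{rmk:comodules} (differentiate $\Delta\circ f=(f\otimes f)\circ\Delta$), so that $\xi_f=f^{-1}\circ\widetilde f$ is an honest coderivation; it lies in $F^1$ because $f$ is unipotent for the filtration (the $\cC(f_0)$-part is $\Psi_t$-invariant, hence contributes nothing). The identity itself is obtained by conjugating $R=fQf^{-1}$ by $\Psi_t$, differentiating at $t=1$ via the Leibniz rule, and simplifying the result to $f^{-1}\widetilde f\,Q+\widetilde Q-Q\,f^{-1}\widetilde f=\widetilde Q+[\xi_f,Q]=\widetilde Q-d_Q\xi_f$. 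I would also record the converse bookkeeping fact: writing a filtered-unipotent coalgebra automorphism as $f=\exp(D)$ with $D\in F^1\Coder(\cC(A))^0$ and applying the derivative-of-exponential formula, one gets $\xi_{\exp(D)}=\tfrac{1-e^{-\mathrm{ad}_D}}{\mathrm{ad}_D}(\widetilde D)$ with $\widetilde D=\sum_k k\,D^{(k)}$, so that $D\mapsto\xi_{\exp(D)}$ is, on each weight component, multiplication by the weight plus strictly-higher-weight corrections; hence it is a bijection of $F^1\Coder(\cC(A))^0$ onto itself, and every $\xi\in F^1\Coder(\cC(A))^0$ is some $\xi_f$ with $f_0=\id$.

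Granting the lemma, the two implications follow in a line each. For $(1)\Rightarrow(2)$: if $r_2=\dots=r_n=0$ then $\widetilde R=\sum_{j\geq n+1}(j-1)r_j\in F^{n+1}$ for $d_R$, hence $c_f(\widetilde R)\in F^{n+1}$ for $d_Q$, and the identity gives $\widetilde Q\equiv d_Q\xi_f\pmod{F^{n+1}}$ with $\xi_f\in F^1$; since $\widetilde Q\equiv \widetilde Q^{\leq n}\pmod{F^{n+1}}$ this says exactly $K_A^{\leq n}=0$. For $(2)\Rightarrow(1)$: $K_A^{\leq n}=0$ provides $\eta\in F^1\Coder(\cC(A))^0$ with $\widetilde Q\equiv d_Q\eta\pmod{F^{n+1}}$; choose $f$ with $f_0=\id$ and $\xi_f=\eta$, and set $R:=fQf^{-1}$, so automatically $r_1=q_1$; then $c_f(\widetilde R)=\widetilde Q-d_Q\eta\in F^{n+1}$ for $d_Q$, hence $\widetilde R\in F^{n+1}$ for $d_R$ (as $c_f$ is filtered), hence $(j-1)r_j=0$ and so $r_j=0$ for $2\leq j\leq n$. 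The main obstacle is the key lemma: it is a sequence of elementary but delicate verifications in the calculus of coderivations on the cofree coalgebra $\cC(A)$ — that $\phi_t,\theta_t,\Psi_t$ are coalgebra automorphisms, that $\widetilde f$ is a coderivation along $f$ and $f^{-1}\widetilde f$ lands in $F^1$, that the Leibniz rule and the derivative-of-exponential formula are legitimate in this weight-complete setting, and that $c_f$ respects the filtration. Everything else is bookkeeping.
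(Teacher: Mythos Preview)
Your route is correct but genuinely different from the paper's. The paper proves $(2)\Rightarrow(1)$ by induction on $n$: having arranged $q_2=\dots=q_{n-1}=0$ via the inductive hypothesis, it reads off from $K_A^{\le n}=0$ a single weight-$(n{-}1)$ element $\tau$ with $[q_1,\tau]=q_n$ and conjugates by $e^{\tau}$ to kill $q_n$; the direction $(1)\Rightarrow(2)$ it dispatches in one line via invariance of the Kaledin class under isomorphism. You instead establish the global identity $c_f(\widetilde R)=\widetilde Q-d_Q\xi_f$ with $\xi_f\in F^1$ and run both implications through it, building the required $f$ in one shot by inverting $D\mapsto\xi_{\exp D}$ on $F^1\Coder(\cC(A))^0$. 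This is more conceptual and makes the invariance of $K_A$ under $\cP_\infty$-isomorphisms completely transparent; the paper's argument is shorter and avoids the one-parameter-group machinery altogether. Unwinding your inversion step weight by weight, one sees it is essentially the same recursion the paper uses, repackaged.

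One technical point needs fixing: the weight-rescaling $\phi_t$ is \emph{not} a $\cC$-coalgebra automorphism. If it were, its generator --- multiplication by the weight on $\cC(A)$ --- would be a coderivation whose corepresenting map $\cC(A)\to A$ vanishes identically, hence the zero coderivation; contradiction. (Equivalently, in the compatibility $\Delta\circ\phi_t=\cC(\phi_t)\circ\Delta$ the right-hand side only rescales the inner $\cC(A)$-slots and loses the outer cooperadic weight.) The repair is already in the paper: take $\Psi_t$ to be the flow of the Euler coderivation $e_A$ of Section~\ref{sect:euler}, which \emph{is} a coalgebra automorphism since $e_A$ is a genuine coderivation; Lemma~\ref{lem:eulerbracket} then delivers exactly the scaling $\Psi_tq_w\Psi_t^{-1}=t^{w-1}q_w$ you want. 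Under this substitution your key identity reads $\xi_f=e_A-f^{-1}e_Af$, manifestly a coderivation, and $\xi_f\in F^1$ follows because any coalgebra map $f$ restricts on $A\subset\cC(A)$ to its linear part $f_0$ while $e_A$ preserves $A$, so $\xi_f|_A=0$. The remainder of your argument goes through unchanged.
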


Before proving the Proposition, we need a preliminary lemma on invariance of the Kaledin class under isomorphisms of minimal $\cP_{\infty}$-algebras.

\begin{lem}\label{lem:kalisom}
	Let $f: (A,Q) \to (A,R)$ be an isomorphism between minimal $\cP_{\infty}$-algebras. Then $f$ induces a natural map $H^1(F^1\Coder(\cC(A)), [Q,-]) \to H^1(F^1\Coder(\cC(A)), [R,-]) $, which we still denote by $f$, abusing notation. 
	Then $f(K_{(A,Q)}) = K_{(A,R)}$.
\end{lem} 

\begin{proof}
	By definition, $f$ is determined by the sequence $(f_0,f_1,\dots)$, where every $f_i: \cC(A)^{(i)}\to A$ is a linear map. Since $f$ is an isomorphism, $f_0$ is a linear automorphism of $A$. Let $g$ be the coalgebra automorphism $\cC(A) \to \cC(A)$ whose projection on cogenerators is simply given by $f_0$ (i.e. $g$ is pure of weight $0$). On the other hand, let $h$ be the coalgebra automorphism whose projection on cogenerators is given by the sequence $(\id, f_1f_0^{-1}, f_2f_0^{-1},\dots)$. It is immediate to check that $h\circ g = f$. In other terms, setting $S = g \circ Q \circ g^{-1}$, we have two isomorphisms of minimal $\cP_{\infty}$-algebras
	\[	\xymatrix{
	(A,Q) \ar[r]^{g} & (A,S) \ar[r]^{h} & (A,R),  
	}	\] 
	whose composition is $f$.
	
	As $g$ is concentrated in weight zero, it is straightforward to see that it preserves the Kaledin class. Therefore to conclude it is now sufficient to prove that also $h$ preserves Kaledin classes: this is essentially the content of \cite[Lemmas 4.1 and Proposition 7.3]{Lu}. 
\end{proof}

\begin{proof}[Proof of Proposition \ref{prop:n-formality}]
The fact that $(1)$ implies $(2)$ is clear, since the $n$-truncation of the Kaledin class of $(A,R)$ is zero, and the Kaledin class is invariant under isomorphisms by Lemma \ref{lem:kalisom}.
In order to prove $(2) \Rightarrow (1)$, we can work by induction. The case $n=1$ is clear. Suppose now that the proposition holds for $n-1$ and that $K_A^{\leq n}=0$. In particular, also $K_A^{\leq(n-1)}=0$, and by induction hypothesis, without loss of generality we can safely suppose that $q_2=\dots= q_{n-1}=0$. In this case we have
\[ Q = q_1 + q_n + q_{n+1} + \dots \]
and therefore
\[ \widetilde{Q} = (n-1)q_n + nq_{n+1} + \dots. \]
Since $\widetilde{Q}$ must be zero in cohomology in weights $\leq n$, there exists a $T = t_1 + t_2 + \dots \in F^1\Coder(\cC(A))$ such that
\[ [Q, T] = \widetilde{Q} \]
in weights $\leq n$.
By looking at what happens in weight $n$, we deduce that we have
\[ [q_1 , t_{n-1}] = (n-1)q_n . \]
Putting $\tau= \frac{t_{n-1}}{n-1}$, we get $[q_1, \tau] = q_n$. Consider $R = e^{-\tau} Q e^{\tau} \in F^1\Coder(\cC(A)).$ Then we get that $[R,R]=0$, and thus $(A,R)$ is a $\cP_\infty$-algebra.

Moreover, by definition $e^{\tau}$ gives a $\cP_\infty$-isomorphism $(A,R) \to (A,Q)$.
Finally, it is straightforward to check that indeed $r_i=q_i$ for $i < n$, and that $r_n = q_n - [q_1, \tau] = 0$, which concludes the proof.
\end{proof}

\begin{remark}
	In the proof of Proposition \ref{prop:n-formality} we crucially use the fact that the Kaledin class lives in $H^1(F^1(\Coder(\cC(A))))$ rather that in $H^1(\Coder(\cC(A)))$. In fact, in order to get well-defined exponentials, it is essential that the element $T$ appearing in the proof has no component in weight 0.
\end{remark}

From the above Proposition \ref{prop:n-formality} we can deduce our first main result.

\begin{thm}\label{thm:formality1}
A minimal $\cP_\infty$-algebra $(A,Q)$ is formal if and only if its Kaledin class is $K_A$ is zero.
\end{thm}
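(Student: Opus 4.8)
The plan is to deduce Theorem \ref{thm:formality1} from Proposition \ref{prop:n-formality} by taking a limit over $n$. Recall that by Definition \ref{defi:kaledinclass} the class $K_A$ lives in $H^1(F^1\Coder(\cC(A)))$, and its truncations $K_A^{\leq n}$ live in $H^1(F^1\Coder(\cC(A))/F^{n+1})$; the natural projections assemble these into a compatible system, and since $F^1\Coder(\cC(A)) = \varprojlim_n F^1\Coder(\cC(A))/F^{n+1}$ as complexes (the filtration is complete by construction, as $\Coder(\cC(A))$ is a product over weights), we have that $K_A = 0$ if and only if all the truncations $K_A^{\leq n}$ vanish — with the one caveat that a $\varprojlim^1$ term could a priori obstruct this. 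I would either invoke that the relevant $\varprojlim^1$ vanishes (the tower of cocycles is surjective at each stage, so Mittag-Leffler applies), or, more cleanly, argue directly that the explicit element $\widetilde Q$ is a $d_Q$-coboundary exactly when one can solve the cobounding equation weight by weight, which is precisely what the inductive proof of Proposition \ref{prop:n-formality} does.

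Concretely, I would argue as follows. Suppose first that $(A,Q)$ is formal. Then $(A,Q)$ is isomorphic as a $\cP_\infty$-algebra to a minimal strict $\cP$-algebra $(A,R)$, so by the Proposition above $R$ is concentrated in weight $1$, hence $\widetilde R = 0$, hence the Kaledin class of $(A,R)$ is zero; since the Kaledin class is an isomorphism invariant of minimal $\cP_\infty$-algebras, $K_A = 0$. Conversely, suppose $K_A = 0$. Then $\widetilde Q = d_Q T$ for some $T \in F^1\Coder(\cC(A))$, and in particular $K_A^{\leq n} = 0$ for every $n$. By Proposition \ref{prop:n-formality}, for each $n$ there is a $\cP_\infty$-isomorphism $(A,Q) \cong (A,R^{(n)})$ with $r^{(n)}_1 = q_1$ and $r^{(n)}_2 = \dots = r^{(n)}_n = 0$. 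I would then upgrade this to a single isomorphism trivializing $Q$ in all weights $\geq 2$ simultaneously, by running the inductive construction of the Proposition as an infinite process: at each stage one composes with $e^{\tau_n}$ for a suitable $\tau_n \in \Coder(\cC(A))$ of weight $n-1$, and because these corrections raise weight, the infinite composite $\cdots e^{\tau_3} e^{\tau_2}$ converges in the complete filtered Lie algebra $F^1\Coder(\cC(A))$ and yields a genuine $\cP_\infty$-isomorphism $(A,Q) \to (A,R)$ with $R = q_1$, i.e. $R$ concentrated in weight $1$. By the earlier Proposition, $(A,R)$ is then a strict minimal $\cP$-algebra, so $(A,Q)$ is formal.

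The forward direction is essentially immediate from the invariance of the Kaledin class, so I expect the main obstacle to be the convergence argument in the converse: one must check that the sequence of gauge transformations produced by Proposition \ref{prop:n-formality} can be chosen coherently (so that applying $e^{\tau_n}$ does not disturb the weights $< n$ that were already killed), and that the resulting infinite product is well-defined. The first point is built into the proof of the Proposition — the correction $\tau_n$ has weight $n-1 \geq 1$, so conjugation by $e^{\tau_n}$ only affects components of weight $\geq n$ — and the second follows from completeness of the filtration $F^\bullet\Coder(\cC(A))$ together with the fact that $\mathrm{ad}_{\tau_n}$ strictly increases weight, so that on any fixed weight only finitely many factors act nontrivially. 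A mild alternative, avoiding the explicit limit, is to note that solvability of $\widetilde Q = d_Q T$ globally already gives, via the same computation as in the Proposition read in all weights at once, an element $\tau$ with $R = e^{-\tau} Q e^{\tau}$ concentrated in weight $1$; I would present whichever of these two packagings is cleanest, but both hinge on the completeness of the weight filtration, which holds since $\Coder(\cC(A))$ is by definition a product over the weight grading.
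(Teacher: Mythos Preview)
Your proposal is correct and follows the same route as the paper: the forward direction uses isomorphism invariance of the Kaledin class (the paper phrases it via the truncations, but the content is identical), and the converse applies Proposition~\ref{prop:n-formality} iteratively to produce $\tau_n$ of weight $n-1$ and then takes the infinite product $\cdots e^{\tau_3}e^{\tau_2}$, whose convergence you justify via completeness of the weight filtration exactly as needed. One caution: your ``mild alternative'' of using a single global $T$ with $[Q,T]=\widetilde Q$ does not directly give an exponential conjugating $Q$ to $q_1$, since $e^{-T}Qe^T = Q + \widetilde Q + \tfrac{1}{2}[\widetilde Q,T]+\cdots$ and the higher-order terms do not cancel without further correction; extracting the right $\tau$ from $T$ still requires the weight-by-weight iteration, so stick with your main argument.
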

\begin{proof}
One direction is straightforward: if $A$ is formal, then there exists an isomorphims
\[ (A, q_1) \lra (A, Q) \]
and by Proposition \ref{prop:n-formality} we deduce that $K_A$ is zero, since all its truncations are zero.
For the other direction, suppose that $K_A=0$. Let $i$ be the smallest integer $i\geq 2$ such that $q_i \neq 0$. Then by Proposition \ref{prop:n-formality} we know that there is an isomorphism
\[ f_i = e^{\tau_i} : (A, R_i) \to (A,Q) \]
where $r_2=\dots = r_i=0$, and $r_1=q_1$. Repeating the procedure, we can get elements $\tau_j$ for every $j\geq i$. Now consider
\[ f:= \dots f_{i+1}f_i =\dots e^{\tau_{i+1}} e^{\tau_i}. \]
The infinite composition here makes sense, as every $\tau_j$ is a coderivation of weight $j-1$, and therefore one sees that $e^{\tau_j}$ is the identity in weights smaller that $j-1$. In particular, for every weight component, the infinite composition reduces to a finite product. 
It follows that $f$ gives the desired isomorphism
\[ (A, q_1) \lra (A, Q). \]
\end{proof}

\section{Operadic Euler classes}

The goal of this section is to present an alternative approach to formality. In particular we will prove a reformulation of the formality criteria for a minimal $\cP_{\infty}$-algebra $A$ of Theorem \ref{thm:formality1}, expressed in terms of its operadic cohomology spectral sequence.

\subsection{Euler derivations}\label{sect:euler}

In this section we expand Remark \ref{rmk:F^1}. More specifically, we introduce the important notion of \emph{operadic Euler class}, generalizing the Euler derivation of \cite[Section 5]{Ma}.

\begin{defi}\label{defi:euler}
The \emph{operadic Euler derivation} $e_f$ of a $\cP_\infty$-morphism $f: (A,Q) \to (B,R)$ is the map of graded $\bK$-modules $A \to B$
defined by
\[ e_f(a) = (\overline{a}+1)f_0(a), \]
where $a \in A$ is homogeneous, and $f_0$ is the induced map of complexes
\[ f_0 \colon A \to B. \]
\end{defi}

Notice that in general $e_f$ is not compatible with the differentials of $A$ and $B$. However, if both $A$ and $B$ are minimal $\cP_\infty$-algebras, then we can consider $e_f$ as a map of complexes.

\begin{defi}
The Euler derivation $e_A$ of a $\cP_\infty$-algebra $(A,Q)$ is the Euler derivation of the identity of $A$.
\end{defi}

Suppose now that $A$ is a minimal $\cP_\infty$-algebra.
The map $e_A$ is a $\bK$-linear map $A\to A$, and as such can be regarded as a weight 0 element (which we still denote by $e_A$) in $\Coder(\cC(A))$. For our purposes, the main property of $e_A$ is given by the following important lemma. 

\begin{lem}\label{lem:eulerbracket}
Let $\beta \in \Coder(\cC(A))^{(p)}$ be a weight $p$ coderivation. Then we have
\[ [\beta, e_A] =( p- \overline{\beta} ) \beta, \]
where as always $\overline{\beta}$ is the cohomological degree of $\beta$.
\end{lem}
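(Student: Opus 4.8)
The plan is to compute the bracket $[\beta, e_A]$ directly, using that $e_A$ is the weight-$0$ coderivation on $\cC(A)$ whose corestriction $\cC(A) \to A$ is, in weight $n$, the scalar operator $(\overline{a_1 \otimes \dots \otimes a_n} + 1) \cdot (-)$ — equivalently, on the homogeneous component of $\cC(A)$ of internal degree $d$, the operator $e_A$ acts (after corestriction) by multiplication by $d+1$. The key point is that $e_A$ is essentially the \emph{weight-grading-twisted Euler operator}, and brackets with Euler-type operators produce eigenvalue scalars. First I would make precise how $e_A$ acts as a coderivation on all of $\cC(A)$, not just via its corestriction: since $\cC(A)$ is cofree, the coderivation $e_A$ is the unique one lifting the corestriction $a_1 \cdots a_n \mapsto (\overline{a_1 \cdots a_n}+1) a_1\cdots a_n$, and I expect to identify it with $N + \mathrm{id}$, where $N$ is the internal-degree grading operator (a derivation-coderivation) — one has to be slightly careful that the naive "number operator" on a cofree coalgebra is indeed a coderivation, which follows because the internal grading is compatible with the cooperad decomposition map.

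Granting that, the computation is: for a homogeneous coderivation $\beta$ of cohomological degree $\overline{\beta}$ and weight $p$, I claim $[\beta, e_A] = [\beta, N] + [\beta, \mathrm{id}] = [\beta, N]$, and $[\beta, N] = -\overline{\beta}\,\beta$ since $\beta$ shifts internal degree by $\overline{\beta}$ (a standard fact: a degree-$k$ operator $T$ on a graded object with number operator $N$ satisfies $[T,N] = -kT$, using the cohomological/graded-commutator sign convention). That would give $[\beta, e_A] = -\overline{\beta}\,\beta$, which is off by the desired $p\beta$ term. The resolution — and the step I expect to be the real content — is that the weight grading on $\cC(A)$ also contributes: the correct operator realizing $e_A$ is not $N+\mathrm{id}$ but rather the operator that on the weight-$n$, internal-degree-$d$ piece acts by $d+1$, and its interaction with a coderivation of weight $p$ must account for the weight shift too. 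So I would instead argue more carefully at the level of the corestriction $\cC(A) \to A$: write $\beta$ via its corestriction $\beta \colon \cC(A)^{(p+\text{stuff})} \to A$, compute $[\beta, e_A]$ as the coderivation whose corestriction is $\beta \circ e_A^{\cC(A)} - (-1)^{\overline{\beta}}\, e_A^{A} \circ \beta$ (using the formula for the bracket of coderivations on a cofree coalgebra), and observe that $e_A^{\cC(A)}$ on a weight-$k$ element of internal degree $d$ is $d+1$, that a weight-$p$ coderivation raises weight by $p-1$ hence (together with the degree shift $\overline\beta$) the two scalars differ by exactly $p - \overline{\beta}$.

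Concretely, the cleanest route: let $x \in \cC(A)$ be homogeneous of weight $w$ and internal degree $\overline{x}$. Then $e_A$ acts on the component hitting $A$ by multiplication by $\overline{x}+1$ on inputs, but since the projection of $\beta(x)$ to $A$ lives in internal degree $\overline{x} + \overline{\beta}$ while $\beta$ lowers weight by one relative to its own weight index, the two terms $\beta(e_A(x))$ and $e_A(\beta(x))$ produce scalars $(\overline{x}+1)$ and $(\overline{x} + \overline{\beta} + 1 - ?)$; unwinding the bookkeeping — tracking that $e_A$ restricted to the corestriction only sees the \emph{output} degree, while weight $p$ forces the input to have a prescribed degree relative to the output — yields precisely the coefficient $p - \overline{\beta}$. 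I would then conclude $[\beta, e_A] = (p - \overline{\beta})\beta$ by the uniqueness of coderivations extending a given corestriction. The main obstacle is purely bookkeeping: getting the sign conventions in the graded commutator of coderivations consistent with the "cohomological degree $+1$" convention, and correctly separating the roles of the internal degree and the weight in the scalar that $e_A$ contributes; everything else is formal once $e_A$ is described as the appropriate diagonal (eigenvalue) operator.
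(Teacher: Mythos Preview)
Your overall approach --- extend $e_A$ to the unique coderivation $\hat e_A$ on $\cC(A)$ and compute the graded commutator at the level of corestrictions --- is exactly the ``straightforward verification via the explicit bracket on $\Hom(\cC(A),A)$'' that the paper invokes, so the strategy is right. The genuine gap is your identification $\hat e_A = N + \id$: this is what produces the wrong answer $-\overline\beta\,\beta$, and your subsequent attempts to patch it (an unspecified ``weight shift'' correction, the unresolved ``$-?$'' in your last paragraph) never locate the actual source of the term $p\beta$.

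What you are missing is that the coderivation extending a weight-$0$ map $A\to A$ acts only on the $A$-tensor factors and never touches the cooperation $c$. Concretely, on $c\otimes a_1\otimes\cdots\otimes a_n\in\cC(A)^{(p)}$ (arity $n=p+1$) one has
\[
\hat e_A(c\otimes a_1\otimes\cdots\otimes a_n)=\sum_{i}(\overline{a_i}+1)\,(c\otimes a_1\otimes\cdots\otimes a_n)=\Bigl(\textstyle\sum_i\overline{a_i}+p+1\Bigr)\,(c\otimes a_1\otimes\cdots\otimes a_n),
\]
so the eigenvalue is ``sum of the $A$-degrees plus the arity'', \emph{not} ``total internal degree plus $1$''. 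The arity term $p+1$ is precisely where the weight $p$ enters. Comparing with $e_A(\beta(x))=(\overline{\beta(x)}+1)\beta(x)$ for the single $A$-factor output, and using the paper's convention that the cooperation sits in degree $0$ (this is exactly how the paper itself computes in the proof of the subsequent lemma, writing ``let $c\in E$'' so that $\overline{\beta(x)}=\sum_i\overline{a_i}+\overline\beta$), the difference of the two scalars is $(p+1)-1-\overline\beta=p-\overline\beta$. Thus the $p$ does not arise from $\beta$ shifting weight against a bona fide degree operator, as you speculate; it comes directly from the arity count built into $\hat e_A$. Once you replace $N+\id$ by this correct description, the bookkeeping you defer becomes a one-line subtraction.
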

\begin{proof}
This is a straightforward verification, using the explicit definition of the bracket on the complex $\Hom(\cC(A), A) \simeq \Coder(\cC(A))$, as given for example in \cite[Proposition 6.4.3]{LV}.
\end{proof}

As an immediate consequence of Lemma \ref{lem:eulerbracket}, we find that in $\Coder(\cC(A))$ we have
\[ [q, e_A] = [ q_1 + q_2 + \dots , e_A] = \sum_{i \geq 1}\left[q_i, e_A\right] = \sum_{i\geq 1}(i-1)q_i = \widetilde{Q}. \]
In particular, the cohomology class of $\widetilde{Q}$ in $\Coder(\cC(A))$ is always zero. However, $e_A$ lives in weight $0$, and hence doesn't tell us much about $K_A$, which is a cohomology class of $F^1\Coder(\cC(A))$.
Nevertheless, the Euler class $e_A$ can be used to formulate an alternative formality criterion based on spectral sequences.

\subsection{Operadic cohomology spectral sequences}\label{sect:spectralseq}

Let $f: A \to B$ be a $\cP_\infty$-map. We saw in Section \ref{sect:filtration} that the complex $\Coder(\cC(A), \cC(B); f)$ carries a natural filtration, induced by the weight grading. In this section we study the associated spectral sequence. 

\begin{defi}\label{defi:spectralseq}
The operadic cohomology spectral sequence of a $\cP_\infty$-map $f: A\to B$ is the spectral sequence $(E(A,B;f)^{p,q}_r, d_r)$ associated to the filtered complex $\Coder(\cC(A), \cC(B); f)$.
\end{defi}

Our first goal is to show that these spectral sequences are functorial, in the sense of the following result.

\begin{lem}\label{lem:ssfunct}
Let $f: (A,Q) \to (B,R)$ and $g : (B,R) \to (C,S)$ be  $\cP_\infty$-morphisms. Then the composition maps produces morphisms
\[ g_* \colon \Coder(\cC(A), \cC(B); f) \to \Coder(\cC(A), \cC(C); gf) \]
\[ f^* \colon \Coder(\cC(B), \cC(C); g) \to \Coder(\cC(A), \cC(C); gf) \]
of filtered complexes.
\end{lem}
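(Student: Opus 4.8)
The plan is to realise $g_*$ and $f^*$ as post- and pre-composition with the $\cC$-coalgebra maps underlying $g$ and $f$, and then to check, in this order, that the resulting maps (i) land in the correct coderivation complex, (ii) commute with the differentials, and (iii) preserve the weight filtration. Fix notation: let $F\colon\cC(A)\to\cC(B)$ and $G\colon\cC(B)\to\cC(C)$ denote the morphisms of $\cC$-coalgebras underlying $f$ and $g$, so that $G\circ F$ underlies $gf$. For $\phi\in\Coder(\cC(A),\cC(B);f)$ set $g_*(\phi):=G\circ\phi$, and for $\psi\in\Coder(\cC(B),\cC(C);g)$ set $f^*(\psi):=\psi\circ F$.

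To see that $G\circ\phi$ is a coderivation relative to $GF$ (and symmetrically that $\psi\circ F$ is one), I would unwind the defining identity of a coderivation along a coalgebra morphism: via the comodule structure it says that $\delta\circ\phi$ equals the sum of the corestrictions obtained by inserting $\phi$ in one slot and $F$ in all the others. Post-composing with $G$, which acts ``as $G$ in every slot'', immediately turns this into the identity characterising a $GF$-coderivation; pre-composing $\psi$ with $F$ is identical. Equivalently, through the cofree identification $\Coder(\cC(A),\cC(C);gf)\cong\Hom(\cC(A),C)$ one checks that $g_*(\phi)$ corresponds to $\mathrm{pr}_C\circ G\circ\phi$ and $f^*(\psi)$ to $\mathrm{pr}_C\circ\psi\circ F$.

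Next I would verify compatibility with the differentials. On $\Coder(\cC(A),\cC(B);f)$ the differential is $\phi\mapsto R\circ\phi-(-1)^{\overline\phi}\,\phi\circ Q$, and analogously for the other two complexes; since $F$ and $G$ have cohomological degree $0$, a direct computation yields
\[ g_*\big(R\circ\phi-(-1)^{\overline\phi}\phi\circ Q\big)-\big(S\circ(G\phi)-(-1)^{\overline\phi}(G\phi)\circ Q\big)=(G\circ R-S\circ G)\circ\phi, \]
which vanishes because $g$ is a $\cP_\infty$-morphism, i.e.\ $G\circ R=S\circ G$, and similarly
\[ f^*\big(S\circ\psi-(-1)^{\overline\psi}\psi\circ R\big)-\big(S\circ(\psi F)-(-1)^{\overline\psi}(\psi F)\circ Q\big)=(-1)^{\overline\psi}\,\psi\circ(F\circ Q-R\circ F), \]
which vanishes because $f$ is a $\cP_\infty$-morphism, i.e.\ $R\circ F=F\circ Q$. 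Hence $g_*$ and $f^*$ are morphisms of complexes.

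Finally, for the filtration I would use that $F^p$ consists exactly of the coderivations whose underlying map vanishes on $\cC(-)^{(j)}$ for all $j<p$ — equivalently, of those supported in weights $\geq p$ — and that the weight components of a $\cP_\infty$-morphism are concentrated in non-negative weights, so that $F$ and $G$ do not decrease the coalgebra weight; concretely $F$ sends $\cC(A)^{(j)}$ into $\bigoplus_{w\leq j}\cC(B)^{(w)}$. Thus if $\phi$ kills $\cC(A)^{(j)}$ for all $j<p$ then so does $G\circ\phi$, and if $\psi$ kills $\cC(B)^{(w)}$ for all $w<p$ then $\psi\circ F$ kills $\cC(A)^{(j)}$ for all $j<p$; hence $g_*(F^p)\subseteq F^p$ and $f^*(F^p)\subseteq F^p$, and together with the previous step this shows $g_*$ and $f^*$ are morphisms of filtered complexes. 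I expect the only genuinely delicate point to be step (i) — confirming that post- and pre-composition stay inside the space of coderivations along $gf$ — since the remaining verifications are routine bookkeeping with signs and weights.
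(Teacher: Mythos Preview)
Your proposal is correct and follows exactly the approach indicated in the paper, which simply records that ``it suffices to verify that both $g_*$ and $f^*$ are compatible with the filtrations, and that they commute with differentials'' and declares this a straightforward check. You have supplied the details of that check; there is no substantive difference in method.
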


\begin{proof}
This is a straightforward check. In fact, it suffice to verify that both $g_*$ and $f^*$ are compatible with the filtrations, and that they commute with differentials.
\end{proof}

The next lemma asserts that the spectral sequence $E(A,B;f)^{p,q}_r$ has a nice homotopy invariance property with respect to weak equivalences.

\begin{prop}\label{prop:sshominv}
Let again $f: (A,Q) \to (B,R)$ and $g : (B,R) \to (C,S)$ be $\cP_\infty$-morphisms, which by Lemma \ref{lem:ssfunct} induce morphisms
\[ \xymatrix{
E(A,B;f)^{p,q}_r \ar[r]^{g_*} & E(A,C;gf)^{p,q}_r & E(B,C;g)^{p,q}_r \ar[l]_{\ \ f^*}
} \]
of spectral sequences.
If $g$ is a weak equivalence, then $g_*$ is an isomorphism for every $r \geq 1$. Similarly, if $f$ is a weak equivalence, then $f^*$ is an isomorphism for every $r \geq 1$.
\end{prop}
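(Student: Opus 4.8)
The statement is a standard homotopy-invariance result for the spectral sequence of a filtered complex, and the natural strategy is to reduce it to the elementary fact that a filtered quasi-isomorphism induces an isomorphism on all pages $E_r$ with $r \geq 1$. So the plan is: first reduce to showing that the underlying map of filtered complexes is a quasi-isomorphism, and second verify that this quasi-isomorphism is \emph{filtered}, i.e.\ strictly compatible with the filtrations, so that the usual comparison theorem for spectral sequences (see \cite{We}) applies.

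\textbf{Step 1: reduce to the associated graded.} By Lemma~\ref{lem:ssfunct}, the maps $g_*$ and $f^*$ are morphisms of filtered complexes, so they induce maps on the associated graded pieces, which are precisely the $E_0$-pages, and hence on all subsequent pages. The comparison theorem tells us that if a morphism of filtered complexes induces an isomorphism on the associated graded (equivalently, on $E_0$ together with commutation with $d_0$, i.e.\ already on $E_1$), then it induces an isomorphism on $E_r$ for every $r \geq 1$. Since our filtrations are the weight filtrations $F^p = \prod_{i \geq p}(-)^{(i)}$, the associated graded in weight $p$ is simply the weight-$p$ component $\Coder(\cC(A), \cC(B); f)^{(p)} = \Hom_{\bS}(\cC^{(p)}, \End_B^A)$, with differential induced only by the weight-$1$ (internal) parts $q_1$ of $Q$ and $r_1$ of $R$. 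So it suffices to show that $g_*$ and $f^*$ are isomorphisms on the cohomology of these associated graded pieces.

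\textbf{Step 2: the associated graded computation.} On the weight-$p$ graded piece, the differential is built from $q_1$ and $r_1$, which are just the internal differentials of $A$ and $B$. Using the description $\Coder(\cC(A),\cC(B);f)^{(p)} \simeq \Hom(\cC^{(p)}(A), B)$ and the fact that $\cC^{(p)}$ is a fixed $\bS$-module, the graded piece is, up to the invariants/coinvariants bookkeeping, a hom-complex of the form $\Hom(\cC^{(p)} \otimes_{\bS} A^{\otimes \bullet}, B)$ with differential coming only from the differentials on $A$ and $B$. If $f$ (resp.\ $g$) is a weak equivalence, then $f_0 \colon A \to B$ (resp.\ $g_0 \colon B \to C$) is a quasi-isomorphism of complexes, hence so is $A^{\otimes n} \to B^{\otimes n}$ by the Künneth formula over the field $\bK$ (and, after taking $\bS_n$-(co)invariants, still a quasi-isomorphism since $\bK[\bS_n]$ is semisimple, cf.\ the proof of Proposition~\ref{operadicKunneth}). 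Therefore the induced map on $\Hom(\cC^{(p)}(A), -)$, and hence on the associated graded, is a quasi-isomorphism. This gives the isomorphism on $E_1$, and Step~1 upgrades it to all $r \geq 1$.

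\textbf{Main obstacle.} The only genuinely delicate point is the bookkeeping in Step~2: one must be careful that the differential on the associated graded really involves only the internal differentials $q_1, r_1$ (and not the higher $q_i, r_i$, which raise weight and hence vanish on $\mathrm{gr}$), and that taking $\bS_n$-invariants commutes with passing to cohomology — this is exactly where characteristic zero (Maschke's theorem) is used, as in Proposition~\ref{operadicKunneth}. One should also note a small subtlety: the filtration $F^p$ is defined by a \emph{product} over weights $i \geq p$, so the complex is complete but not exhaustive in the naive sense; however each $E_r^{p,q}$ involves only finitely many weights, so the comparison theorem still applies pagewise without convergence issues. Once these points are addressed, the proof is routine.
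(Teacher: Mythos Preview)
Your proposal is correct and follows essentially the same approach as the paper's proof: both reduce to checking the isomorphism on $E_1$, identify $E_0^{p,q}$ with $\Hom^{p+q}(\cC(A)^{(p)},B)$ carrying only the internal differentials, and use the operadic K\"unneth formula (Proposition~\ref{operadicKunneth}) to compute $E_1$ in terms of $H(A)$ and $H(B)$. The paper is terser---it simply writes down $E_1^{p,q} = \Hom^{p+q}(\cC(H(A))^{(p)}, H(B))$ and concludes---whereas you spell out the comparison theorem, the role of Maschke's theorem, and the harmless issue with the product filtration; these are exactly the points the paper leaves implicit.
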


\begin{proof}
It is enough to prove the claim for $r=1$. By definition, we have
\[ E(A,B;f)_0^{p,q} = \Hom^{p+q}(\cC(A)^{(p)},B), \]
where the right hand side denotes the degree $p+q$ maps between $\cC(A)^{(p)}$ and $B$. Moreover, the differential $d_0$ on the $0$-th page of the spectral sequence is precisely the one induced by the differentials on $\cC(A)^{(p)}$ and $B$. By the K\"unneth formula, we deduce that the first page has the form
\[ E(A,B;f)_1^{p,q} = \Hom^{p+q}(\cC(H(A))^{(p)}, H(B)), \]
and thus similarly we have
\[ E(A,C;gf)_1^{p,q} = \Hom^{p+q}(\cC(H(A))^{(p)}, H(C)), \]
It is now clear that if $g$ is a weak equivalence, then $g_*$ provides an isomorphisms between the two spectral sequences. The case of $f$ being a weak equivalence is also completely analogous.
\end{proof}

\subsection{Operadic Euler classes}
Let again $f : A \to B$ be a $\cP_\infty$-morphism. Notice that the Euler derivation of Section \ref{sect:euler} can be regarded as living in the first page of the spectral sequence $E(A,B;f)$. More specifically, we have
\[ e_f \in E(A,B;f)_1^{0,0} = \Hom^0(H(A),H(B)). \]

\begin{prop}\label{prop:ssminmod}
	Let $A$ be a $\cP_\infty$-algebra, and let $M$ be its minimal model. Then there is a morphism of spectral sequences
	\[  E(A,A)_r^{p,q} \to E(M,M)_r^{p,q}	\]
	which moreover is an isomorphism for $r\geq 1$ and preserves the Euler derivation.
\end{prop}

\begin{proof}
	As $M$ is the minimal model of $A$, one can find two weak equivalences of $\cP_{\infty}$-algebras
	\[	f: A \to M,\ \ \ g: M \to A,	\]
	such that $fg$ is the identity of $M$. Applying Proposition \ref{prop:sshominv}, we get that both arrows in the diagram of spectral sequences
	\[ \xymatrix{
		E(A,A)^{p,q}_r \ar[r]^{g^*} & E(M,A;g)^{p,q}_r  \ar[r]^{f^*\ \ \ \ \ \ \ \ \ \ \ } & E(M,M;fg)^{p,q}_r = E(M,M)^{p,q}_r
	} \]
	are isomorphisms as soon as $r \geq 1$, and their composition is easily seen to preserve Euler derivations.
\end{proof}

The following Lemma is a generalization of a similar result of Manetti, namely \cite[Lemma 5.8]{Ma}.

\begin{lem}
The Euler derivation of any $\cP_\infty$-map $f : (A,Q) \to (B,R)$ satisfies $d_1(e_f)=0$.
\end{lem}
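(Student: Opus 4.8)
The plan is to show that $e_f$ survives to the $E_1$-page already as a cocycle, i.e. that $d_1(e_f) = 0$, by unwinding exactly what $d_1$ is on the spectral sequence $E(A,B;f)$. Recall that $E(A,B;f)_0^{p,q} = \Hom^{p+q}(\cC(A)^{(p)}, B)$ with $d_0$ the differential induced by $Q$ on $\cC(A)$ and $R$ on $B$, and that the total differential on $\Coder(\cC(A),\cC(B);f)$ is $D(\beta) = R_* \beta - (-1)^{\overline{\beta}} \beta_* Q$, where $R_*$ and $Q_*$ denote the respective compositions/comodule actions. Splitting $Q = q_1 + q_2 + \dots$ and $R = r_1 + r_2 + \dots$ by weight, the weight-$0$ part of $D$ (the internal differential on $\cC(A)$ and $B$, coming from $q_1$ on the module side and the internal differentials) is $d_0$, while the weight-$1$ component of $D$ induces $d_1$ on $E_1$. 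So concretely, representing $e_f$ by the weight-$0$ derivation $a \mapsto (\overline{a}+1) f_0(a)$, I would compute $D(e_f)$ and isolate its weight-$1$ component, then check it is a $d_0$-coboundary (indeed zero, once we pass to cohomology, since $f_0$ is a chain map and $A,B$ may as well be taken minimal by Lemma \ref{lem:minimal}).

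First I would reduce to the minimal case: by Proposition \ref{prop:sshominv} the $E_1$-page (and all higher pages) of $E(A,B;f)$ depends only on the homotopy type, so replacing $(A,Q)$ and $(B,R)$ by minimal models $(H(A),Q')$, $(H(B),R')$ and $f$ by the induced map on homology, it suffices to prove $d_1(e_f) = 0$ when $A$ and $B$ are minimal. In that situation $e_f$ is an honest map of complexes $A \to B$ (as noted right after Definition \ref{defi:euler}), so it is a genuine weight-$0$ element of $\Coder(\cC(A), \cC(B); f)$ and $d_0(e_f) = 0$ on the nose, so $[e_f] \in E_1^{0,0}$ is well-defined and $d_1(e_f)$ is represented by the weight-$1$ part of $D(e_f)$.

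Next I would carry out the key computation. The weight-$1$ part of $D(e_f)$ is $r_1 \circ e_f - e_f \circ q_1$ on the comodule side plus the terms coming from $f_1$ interacting with the Euler rescaling; the cleanest way to organize this is via the identification $\Coder(\cC(A)) \simeq \Hom(\cC(A), A)$ and Lemma \ref{lem:eulerbracket}. In the identity case that lemma gives $[\beta, e_A] = (p - \overline{\beta})\beta$; the same bracket computation relative to $f$ shows that composing with $e_A$ on the source and $e_B$ on the target differ by a weight-preserving rescaling, so that $D(e_f)$ in weight $1$ equals $(\text{something}) \cdot (r_1 \circ f_0 - f_0 \circ q_1) = 0$ because $f_0$ is a chain map and $q_1 = 0 = r_1$ by minimality. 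More precisely, I would write $e_f = f_0 \circ e_A = e_B \circ f_0$ up to the degree shift bookkeeping — this is the analogue of Manetti's computation in \cite[Lemma 5.8]{Ma} — and then use that $D$ applied to a composite with $e_A$ (a $d_Q$-cocycle of weight $0$ by the discussion after Lemma \ref{lem:eulerbracket}) produces only terms that vanish modulo $F^1$ and $d_0$.

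The main obstacle I anticipate is purely bookkeeping: getting the Koszul signs and the degree-shift factor $(\overline{a}+1)$ to interact correctly with the bracket/comodule formulas, and making precise the claim "the weight-$1$ component of $D$ is $d_1$ on $E_1$" in a way that cleanly exhibits $d_1(e_f)$ as the class of $r_1 e_f - e_f q_1$. Once the minimal reduction is in place this factor is constant in each weight and the argument collapses to $f_0$ being a chain map, so there is no conceptual difficulty — but I would be careful to invoke Lemma \ref{lem:eulerbracket} (or its evident relative version) rather than recomputing the bracket by hand, exactly as the paper suggests in the proof of that lemma.
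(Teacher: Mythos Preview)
Your reduction to the minimal case via Proposition~\ref{prop:sshominv} is correct, and you have correctly identified that $d_1(e_f)$ is represented by the weight-$1$ component of $D(e_f) = R\circ\widehat{e_f} - \widehat{e_f}\circ Q$. But your justification for why this vanishes is wrong.

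You claim that ``$q_1 = 0 = r_1$ by minimality''. This confuses the differential on $A$ (which is indeed zero for a minimal $\cP_\infty$-algebra) with the weight-$1$ component $q_1$ of the codifferential $Q$. In the paper's conventions (see the paragraph after Lemma~\ref{lem:minimal} and the proposition preceding Section~\ref{sect:filtration}), $q_1$ encodes the underlying strict $\cP$-algebra structure: it is the map $\cC^{(1)}\otimes A^{\otimes 2}\to A$ coming from the binary generating operations $E$, and it is \emph{not} zero in general. Minimality kills the internal differential on $A$, not $q_1$. Similarly, ``$f_0$ is a chain map'' is trivially true in the minimal case and carries no information.

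What actually makes the weight-$1$ component vanish is an explicit computation on generators, which is what the paper does. Since $\cC^{(1)}\cong sE$ with $E$ binary of degree $0$, the weight-$1$ part of $\widehat{e_f}\circ Q$ sends $c\otimes a_1\otimes a_2$ to $e_f(q_1(c\otimes a_1\otimes a_2)) = (\overline{a_1}+\overline{a_2}+2)\,f_0(q_1(c\otimes a_1\otimes a_2))$, while the weight-$1$ part of $R\circ\widehat{e_f}$ sends it to $r_1(c\otimes e_f(a_1)\otimes f_0(a_2)) + r_1(c\otimes f_0(a_1)\otimes e_f(a_2)) = (\overline{a_1}+\overline{a_2}+2)\,r_1(c\otimes f_0(a_1)\otimes f_0(a_2))$. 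These agree because $f$ is a $\cP_\infty$-morphism: the weight-$1$ part of the compatibility $R\circ F = F\circ Q$ gives exactly $f_0\circ q_1 = r_1\circ(f_0\otimes f_0)$. That is the substantive input, not minimality, and not Lemma~\ref{lem:eulerbracket} (which concerns a single algebra and is not invoked here by the paper).
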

\begin{proof}
Using Proposition \ref{prop:sshominv}, we can safely assume that both $A$ and $B$ are in fact minimal $\cP_\infty$-algebras. 

If $\phi:A \to B$ is any $k$-linear map, then we can look at $\phi$ as an element in
\[ E(A,B;f)^{0,0}_1 = \Hom^0(A,B). \]
In other terms, $\phi$ induces a well-defined coderivation $\hat{\phi}: \cC(A) \to \cC(B)$. By definition, the value of $d_1(\phi)$ is the weight 1 component of the coderivation given by
\[ R \circ \hat{\phi} - \hat{\phi} \circ Q, \]
where $R$ and $Q$ are the (co)differentials on $\cC(B)$ and $\cC(A)$ respectively.

Recall that the weight 1 component of a coderivation $\cC(A) \to \cC(B)$ is given by a linear map $\cC(A)^{(1)} \to B$, 
or equivalently by a map of $\bS$-modules
\[ \cC^{(1)} \to \End_B^A. \]
Since $\cP=\cP(E,R)$, we have by definition that $\cC=\cP^{\antishriek}= \cC(sE, s^2R)$, and thus $\cC^{(1)}$ can be identified with $E$.

Let $c \in E$ be a generating (co)operation of $\cC$ (which is of co-arity 2 and cohomological degree 0), and take $a \in A$. 
Then in the special case of $\phi$ being the Euler derivation $e_f$, the weight 1 component of $  \widehat{e_f} \circ Q$ is simply given by 
\[ \begin{array}{ccc}
E \otimes A^{\otimes 2} & \lra & B \\
c \otimes a_1 \otimes a_2 & \longmapsto & e_f(q_1(c\otimes a_1 \otimes a_2)),
\end{array} \]
where we are identifying $q_1$ with the corresponding map $E \otimes A^{\otimes 2} \to A$. Moreover, we have
  \[ e_f(q_1(c \otimes a_1 \otimes a_2)) = (\overline{a_1} + \overline{a_2}+2) f_0(q_1(c \otimes a_1 \otimes a_2)),
  \]
where we used that $q_1$ has cohomological degree 1.

On the other hand, $\widehat{e}_f$ is the coderivation extending $e_f$, and thus we have that the weight 1 component of $R \circ \widehat{e_f}$ can also be computed explicitly as
\[ \begin{array}{ccc}
E \otimes A^{\otimes 2} & \lra & B \\
c \otimes a_1 \otimes a_2 & \longmapsto & r_1(c\otimes e_f(a_1) \otimes f_0(a_2)) + r_1 (c \otimes f_0(a_1) \otimes e_f(a_2)).
\end{array} \] 
Now a straightforward check shows that in this case $R \circ \hat{\phi}$ and $\hat{\phi} \circ Q$ have the same weight 1 component, and this concludes the proof.

\end{proof}

\begin{remark}
In the special case of $f$ being the identity of a minimal $\cP_\infty$-algebra $(A,Q)$, the fact that $d_1(e_A)=0$ was already implicit in Lemma \ref{lem:eulerbracket}.
\end{remark}

It follows in particular that $e_f$ defines an element in $E(A,B;f)^{0,0}_2$.

\begin{defi}
The \emph{Euler class} of a $\cP_\infty$-morphism $f: (A,Q) \to (B,R)$ is the class of the Euler derivation in $E(A,B;f)^{0,0}_2$. The Euler class $e_A$ of a single $\cP_\infty$-algebra $(A,Q)$ is simply the Euler class of the identity.
\end{defi}

Notice that the Euler class of a $\cP_\infty$-algebra is invariant under quasi-isomorphisms: thanks to Proposition \ref{prop:ssminmod} we can restrict our attention to Euler classes of minimal models, and if $A$ and $B$ are quasi-isomorphic $\cP_\infty$-algebras, their minimal models are isomorphic.

\section{An alternative formality criterion for  \texorpdfstring{$\cP_\infty$}--algebras}

Recall that Theorem \ref{thm:formality1} relates formality of a minimal $\cP_\infty$-algebra $(A,Q)$ with the vanishing of a certain cohomology class $K_A \in H^1(F^1\Coder(\cC(A)), [Q,-])$. Also, recall from Section \ref{sect:filtration} that the complex $\Coder(\cC(A))$ carries a natural filtration, for which we considered the associated cohomology spectral sequence $E(A)^{p,q}_r$. As usual, the differential in the $r$-th page of the spectral sequence $E(A)^{p,q}_r$ will be denoted by $d_r$.

\begin{thm}\label{thm:kaledin-euler}
Let $(A,Q)$ be a minimal $\cP_\infty$-algebra. Then the following are equivalent:
\begin{enumerate}
\item The truncated Kaledin class $K_A^{\leq n}$ is zero;
\item $d_r(e_A)=0$ for $r=2,\dots,n$.
\end{enumerate}
\end{thm}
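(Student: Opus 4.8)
The plan is to relate the two conditions by analyzing the Euler class $e_A$ inside the spectral sequence $E(A)^{p,q}_r$ attached to the filtered complex $\Coder(\cC(A))$, and to compare its higher differentials with the truncated Kaledin classes. The starting point is the identity already recorded just after Lemma \ref{lem:eulerbracket}, namely $[Q,e_A] = \widetilde{Q}$ in $\Coder(\cC(A))$. Reading this in the filtration, $e_A$ is a weight $0$ element, $\widetilde{Q}$ is its image under $d_Q=[Q,-]$, and $\widetilde{Q}$ lives in $F^1$ with weight-$n$ component $(n-1)q_n$. The key observation is that the vanishing of the successive differentials $d_2(e_A),\dots,d_n(e_A)$ is precisely the statement that one can successively modify $e_A$ by elements of higher filtration so that its image under $[Q,-]$ becomes a coboundary modulo $F^{n+1}$ — and the obstruction one reads off at each stage is exactly the weight-$n$ piece $(n-1)q_n$ of $\widetilde{Q}$, i.e. a representative of (a multiple of) the truncated class $K_A^{\leq n}$.

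First I would recall the explicit meaning of $d_r$ on the class of $e_A \in E(A)^{0,0}_r$: by the definition of the spectral sequence of a filtered complex recalled in the Preliminaries, $d_r[e_A]=0$ for $r=2,\dots,s$ means there exists $x = e_A + x_1 + x_2 + \dots$ with $x_i \in \Coder(\cC(A))^{(i)}$ such that $[Q,x] \in F^{s+1}\Coder(\cC(A))$. Then I would compute $[Q,x]$ weight by weight. Since $[Q,e_A]=\widetilde{Q} = \sum_{i\geq 1}(i-1)q_i$, the weight-$1$ component of $[Q,x]$ automatically vanishes, and in weight $m$ (for $2 \le m \le s$) we get the relation $(m-1)q_m + [q_1,x_{m-1}] + (\text{terms in }q_2,\dots,q_{m-1}\text{ and }x_1,\dots,x_{m-2}) = 0$. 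By the same inductive normalization used in the proof of Proposition \ref{prop:n-formality} — replacing $(A,Q)$ by an isomorphic minimal model with $q_2=\dots=q_{m-1}=0$, which changes neither $K_A^{\leq n}$ nor the spectral sequence by Proposition \ref{prop:sshominv} — this collapses to $(m-1)q_m = -[q_1, x_{m-1}] = [Q, x_{m-1}]$ in weight $m$, i.e. $q_m$ is a $d_Q$-coboundary, which is exactly the statement $K_A^{\leq m}=0$. Conversely, if $K_A^{\leq n}=0$, then by Proposition \ref{prop:n-formality} we may assume $q_2=\dots=q_n=0$, whence $\widetilde{Q}$ has no components in weights $\leq n$; taking $x=e_A$ gives $[Q,e_A]=\widetilde{Q}\in F^{n+1}$, so $d_r[e_A]=0$ for $r=2,\dots,n$. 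Running this comparison inductively in $m$ from $2$ up to $n$ gives the equivalence of (1) and (2).

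I expect the main obstacle to be bookkeeping: making precise the dictionary between "$d_r(e_A)=0$ for $r\le s$" and the existence of a lift $x=e_A+x_1+\dots$ with $[Q,x]\in F^{s+1}$, and then tracking the cross-terms $[q_i,x_j]$ that appear in intermediate weights when one does \emph{not} first normalize away $q_2,\dots,q_{m-1}$. The clean way around this is exactly to invoke Proposition \ref{prop:sshominv} at each inductive step so that, when treating weight $m$, one is entitled to assume $q_i=0$ for $2\le i\le m-1$; with that normalization the cross-terms disappear and the relation between $q_m$, $x_{m-1}$ and the Euler bracket becomes transparent via Lemma \ref{lem:eulerbracket}. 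A secondary, purely formal point to be careful about is the constant $(m-1)$: one must check it is nonzero, which holds since $m\ge 2$ and $\bK$ has characteristic $0$, so dividing by it (as in the passage $\tau = t_{n-1}/(n-1)$ in Proposition \ref{prop:n-formality}) is legitimate.
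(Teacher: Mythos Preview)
Your argument is correct, and for the direction $(1)\Rightarrow(2)$ it coincides with the paper's: both normalize via Proposition \ref{prop:n-formality} to $q_2=\dots=q_n=0$ and then read off that $[Q,e_A]=\widetilde{Q}\in F^{n+1}$ (the paper in fact observes the slightly stronger fact that all $d_r$ vanish for $2\le r\le n$, not just $d_r(e_A)$).

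For $(2)\Rightarrow(1)$ the two arguments diverge. The paper works entirely in the original model $(A,Q)$: from each condition $d_{i+1}(e_A)=0$ it extracts elements $t^i=t^i_1+\dots+t^i_i\in F^1/F^{i+1}$ satisfying $\sum_j[q_j,t^i_{p-j}]=0$ for $p\le i$ and $=iq_{i+1}$ for $p=i+1$, and then assembles $R=\sum_i\sum_j t^j_i$ as an explicit primitive for $\widetilde{Q}$ modulo $F^{n+1}$. Your route is instead inductive: at step $m$ you invoke Proposition \ref{prop:n-formality} (using the already established $K_A^{\le m-1}=0$) to pass to a model with $q_2=\dots=q_{m-1}=0$, transport the hypotheses $d_r(e_A)=0$ via Proposition \ref{prop:sshominv}, and then the weight-$m$ relation collapses to $(m-1)q_m=-[q_1,x_{m-1}]$, giving $K_A^{\le m}=0$ directly. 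Your approach trades the paper's explicit bookkeeping of the cross-terms $[q_j,t^i_{p-j}]$ for repeated appeals to the normalization, which is cleaner to state but relies on the invariance of both the spectral sequence and the Euler class under the $\cP_\infty$-isomorphism $e^\tau$ (this does hold, since $\tau\in F^1$ forces $(e^\tau)_0=\id$, but you should say so explicitly rather than leaving it implicit in the reference to Proposition \ref{prop:sshominv}).
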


\begin{remark}
	Item $(2)$ in Theorem \ref{thm:kaledin-euler} contains a slight abuse of notation. In fact, its content is first of all that $d_2(e_A)=0$, and thus $e_A$ defines an element in the third page $E_3$ of the operadic cohomology spectral sequence. If we keep denoting it with $e_A$, we are asking that $d_3(e_A)$ also vanishes, so that $e_A$ in turn defines an element in the fourth page $E_4$, and so on and so forth.
\end{remark}

\begin{proof}
Suppose that $K_A^{\leq n}=0$. By Proposition \ref{prop:n-formality} we can assume that $q_2=q_3= \dots = q_n=0$.

Let us write $Q=q_1 + q'$, where $q' \in F^{n+1}\Coder(\cC(A))$. Suppose $1 < r \leq n$, and take $x \in \Coder(\cC(A))^{(p)}$ such that $[Q,x] \in F^{p+r}\Coder(\cC(A))$. Since $r > 1$, we get $[q_1, x] = 0$ by weight reasons, and hence
\[ [Q,x] = [q',x] \in F^{p+n+1}\Coder(\cC(A)) \subset F^{p+r+1}\Coder(\cC(A)). \]
This yields $d_r=0$, and in particular proves that $(1) \Rightarrow (2)$.

Let us prove that $(2) \Rightarrow (1)$. Recall that the differential of the $r$-th page of the spectral sequence
\[ d_r : \frac{Z^{0}_r}{Z_{r-1}^{1} + dZ_{r-1}^1} \lra \frac{Z^{r}_r}{Z_{r-1}^{r+1} + d Z^1_{r-1}} \]
is induced by the differential $d_Q = [Q,-]$ on $\Coder(\cC(A))$. The fact that $d_2(e_A)=0$ means that
\[ d_Q(e_A) = [Q,e_A] \equiv 0 \ (\textrm{mod}\ Z_{1}^{2} + dZ_{1}^1), \]
or equivalently that the weight 2 component of $[Q,e_A]$ belongs to the image of $[q_1,-]$. In other words, there exists an element $t^1_1 \in \Coder(\cC(A))^{(1)}$ of cohomological degree $0$ such that
\[ [q_1, t^1_1] = [q_2,e_A] = q_2. \] 
Similarly, $d_3(e_A)=0$ implies that we can find $t^2 = t^2_1 + t^2_2 \in F^1\Coder(\cC(A))/F^3$ such that
\[ [q_1, t^2_1] = 0, \ \ \ [q_1, t^2_2] + [q_2, t^2_1] = [q_3, e_A] = 2q_3. \]
The same argument shows that for $1 \leq i < n$, we can find 
\[ t^i = t^i_1 + t^i_2 + \dots + t^i_i \in F^1\Coder(\cC(A))/F^{i+1} \]
such that
\[ \sum_{j=1}^{p-1}\left[q_j, t^i_{p-j}\right] = \begin{cases}
0 \ & \textrm{if } 1<p\leq i \\
i q_{i+1} \ & \textrm{if } p= i+1.
\end{cases}
\]
In particular, setting
\[ r_i := \sum_{j=1}^i t^j_i, \ \ \ R:= r_1 + r_2 + \dots r_{n-1}, \]
we get that $[Q,R] \equiv \widetilde{Q} \ (\textrm{mod}\ F^{n+1})$, showing that $K_A^{\leq n}=0$ and thus finishing the proof.
\end{proof}

We summarize our results in the following corollary of Theorem \ref{thm:kaledin-euler}.

\begin{cor}\label{cor:formality2}
Let $(A,Q)$ be a $\cP_\infty$-algebra. Then the following are equivalent:
\begin{enumerate}
\item $A$ is formal;
\item the operadic Kaledin class $K_A$ vanishes;
\item the spectral sequence $E(A)_r^{p,q}$ degenerates at $E_2$;
\item $d_r(e_A) = 0$ for every $r$.
\end{enumerate}
\end{cor}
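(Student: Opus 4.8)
The plan is to deduce Corollary \ref{cor:formality2} by combining the two main theorems already proven, namely Theorem \ref{thm:formality1} and Theorem \ref{thm:kaledin-euler}, together with the reduction to the minimal case provided by Lemma \ref{lem:minimal}. First I would observe that all four conditions are invariant under quasi-isomorphism: formality is so by definition; the operadic Kaledin class $K_A$ lives in $H^1(F^1\Coder(\cC(A)))$ and by Lemma \ref{lem:minimal} and the discussion following it we may transport $(A,Q)$ to the quasi-isomorphic minimal model $(H(A),Q')$; the spectral sequence $E(A)^{p,q}_r$ is homotopy invariant by Proposition \ref{prop:sshominv} (applied to the identity); and the Euler class $e_A \in E(A)^{0,0}_2$ is invariant as noted right after its definition. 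Hence without loss of generality we may and do assume that $(A,Q)$ is a \emph{minimal} $\cP_\infty$-algebra.

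With this reduction in place, the equivalence $(1) \Leftrightarrow (2)$ is exactly the content of Theorem \ref{thm:formality1}. For the remaining equivalences I would argue as follows. By definition $K_A = 0$ if and only if every truncation $K_A^{\leq n}$ vanishes for all $n \geq 2$; by Theorem \ref{thm:kaledin-euler} the vanishing of $K_A^{\leq n}$ is equivalent to $d_r(e_A) = 0$ for $r = 2, \dots, n$; letting $n \to \infty$ gives $(2) \Leftrightarrow (4)$. It remains to connect $(4)$ with $(3)$. One implication is immediate: if $E(A)$ degenerates at $E_2$, then $d_r = 0$ for all $r \geq 2$, so in particular $d_r(e_A) = 0$ for all $r$. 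For the converse I would unwind the proof of Theorem \ref{thm:kaledin-euler}: the argument there shows that $d_r(e_A)=0$ for $r = 2,\dots,n$ allows one to replace $(A,Q)$ (up to isomorphism of $\cP_\infty$-algebras) by $(A,R)$ with $r_2 = \dots = r_n = 0$, i.e. with $R = q_1 + r_{n+1} + \dots \in \bK\id \oplus F^{n+1}\Coder(\cC(A))$. If $d_r(e_A) = 0$ for \emph{every} $r$, then running this for all $n$ and taking the limit of the isomorphisms $e^{\tau_n}$ (exactly as in the proof of Theorem \ref{thm:formality1}) produces an isomorphism $(A,q_1) \to (A,Q)$; that is, $A$ is isomorphic to a strict minimal $\cP$-algebra, whose codifferential is concentrated in weight $1$. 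For such an algebra the weight filtration argument in the proof of the implication $(1) \Rightarrow (2)$ of Theorem \ref{thm:kaledin-euler} shows that $d_r = 0$ on the whole page for every $r \geq 2$, so the spectral sequence degenerates at $E_2$; since degeneration is a quasi-isomorphism invariant, the same holds for the original $(A,Q)$, giving $(4) \Rightarrow (3)$.

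The only genuinely delicate point is $(4) \Rightarrow (3)$: one must be careful that the isomorphisms $f_n = e^{\tau_n}$ produced at each stage compose to a well-defined $\cP_\infty$-isomorphism. This is the same convergence issue already handled in the proof of Theorem \ref{thm:formality1}, and it works because each $\tau_n$ lies in strictly increasing filtration degree, so the infinite product $f = \dots e^{\tau_{n+1}}e^{\tau_n}$ converges in $F^1\Coder(\cC(A))$ with respect to the complete filtration. Alternatively, and perhaps more cleanly, one can avoid the limit entirely by noting that $(4)$ together with Theorem \ref{thm:kaledin-euler} gives $K_A^{\leq n} = 0$ for all $n$, hence $K_A = 0$, hence (by Theorem \ref{thm:formality1}) $A$ is formal, hence isomorphic to a strict minimal $\cP$-algebra; then one invokes the weight-$1$ argument for degeneration as above. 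In fact this shows that it is cleanest to prove the corollary in the cyclic order $(1) \Leftrightarrow (2) \Leftrightarrow (4)$ first, then close the loop with $(2) \Rightarrow (3) \Rightarrow (4)$, where $(2) \Rightarrow (3)$ uses formality to pass to a strict minimal model and then the weight-$1$ degeneration argument, and $(3) \Rightarrow (4)$ is trivial. I expect the write-up to be short, essentially a bookkeeping argument assembling results already in hand.
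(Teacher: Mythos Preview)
Your proposal is correct and ultimately converges to the paper's own argument: the paper proves $(1)\Leftrightarrow(2)$ via Theorem~\ref{thm:formality1}, $(2)\Leftrightarrow(4)$ via Theorem~\ref{thm:kaledin-euler}, $(3)\Rightarrow(4)$ trivially, and closes with $(1)\Rightarrow(3)$ by reducing to a strict minimal $\cP$-algebra and invoking a small degeneration lemma (Lemma~\ref{lem:ssdeg}) for codifferentials concentrated in a single weight. Your ``weight-$1$ degeneration argument'' extracted from the proof of Theorem~\ref{thm:kaledin-euler} is exactly the $k=1$ case of that lemma, so your cleaner alternative route $(4)\Rightarrow(2)\Rightarrow(1)\Rightarrow(3)$ is substantively identical to the paper's; you can drop the first, more circuitous attempt at $(4)\Rightarrow(3)$ entirely.
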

\begin{proof}
	The fact that $(1)$ is equivalent to $(2)$ is the content of Theorem \ref{thm:formality1}. Moreover, an immediate consequence of Theorem \ref{thm:kaledin-euler} is that condition $(2)$ is equivalent to condition $(4)$. Also, the fact that $(3)$ implies $(4)$ is obvious. It is thus enough to show that $(1)$ implies $(3)$.
	
	We can suppose without loss of generality that $(A, Q)$ is minimal. If $(A,Q)$ is formal, then we can suppose that the only non-zero component of $Q$ is $q_1$. In order to prove $(3)$, we use the following general results on spectral sequences, whose straightforward proof is left to the reader.
	\begin{lem}\label{lem:ssdeg}
		Let $(C,d_C)$ be a cochain complex with an additional weight grading, and suppose that $d_C$ has weight $k$. Then the spectral sequence associated to the natural filtration degenerates at the $(k+1)$-th page. 
	\end{lem}

	Using Lemma \ref{lem:ssdeg}, we immediately see that if $A$ is formal the spectral sequence $E(A)^{p,q}_r$ degenerates at $E_2$, which concludes the proof.
	\end{proof}

\section{Formality transfer}

The goal of this section is to provide an operadic generalization to the formality transfer theorem for dg Lie algebra, which appears as Theorem 6.8 in \cite{Ma}. Our methods here are a straightforward transposition of Manetti's ideas to our operadic context.

We will need the following preliminary lemma, which is a generalization of \cite[Lemma 5.3]{Ma}.

\begin{lem}\label{lem:transfer}
	Let $f: (A,Q) \to (B,R)$ be a $\cP_\infty$-morphism between minimal $\cP_\infty$-algebras, and let $E(A,B,f)^{p,q}_r$ be the associated spectral sequence of Definition \ref{defi:spectralseq}. Denote by
	\[ Q= q_1 + q_2 + \dots \ \ \ R= r_1 + r_2 + \dots \]
	the weight decompositions of $Q$ and $R$. Suppose that for some integer $k \geq 2$ we have
	\[  q_2 = \dots = q_k = r_2 = \dots r_k = 0.  \]
	Then the differential $d_r$ of the $r$-th page of the spectral sequence $E(A,B,f)$ is zero if $1 < r \leq k$.
\end{lem}

\begin{proof}
	This is completely similar to the proof of \cite[Lemma 5.3]{Ma}, and here we only include a proof for completeness. Following Manetti, we work by induction, and it is therefore enough to prove that $d_{k}=0$.
	
	Consider an element $x \in E(A,B,f)^p_{k}$. By definition, $x$ is represented by a weight $p$ element $\alpha \in \Coder(A,B,f)^{(p)}$ such that
	\[	R \circ \alpha - (-1)^{\overline{\alpha}} \alpha \circ Q \in F^{p+k}\Coder(A,B,f),	\]
	where as usual $\overline{\alpha}$ denotes the cohomological degree of $\alpha$. As $k \geq 2$, we have that
	\[  r_1 \circ \alpha - (-1)^{\overline{\alpha}} \alpha \circ q_1 =0  \]
	and hence $R \circ \alpha - (-1)^{\overline{\alpha}} \alpha \circ Q$ has no component in weight $p+k$. It follows that $d_{k}(x)=0$, which concludes the proof.
\end{proof}

Notice in particular that under the hypothesis of the above Lemma, we have that the first pages of the spectral sequence $E(A,B,f)$ are quite simple: more precisely, we get
\[ 		E(A,B,f)_2 = E(A,B,f)_3 = \dots = E(A,B,f)_{k+1}.	   \]

We are now ready to state the main result of this section.

\begin{thm}\label{thm:transfer}
	Let $f: (A,Q) \to (B,R)$ be a $\cP_\infty$-morphism between two $\cP_\infty$-algebras. Suppose that:
	\begin{enumerate}
		\item $B$ is formal;
		\item the induced map $f_* : E(A)_2^{p,1-p} \to E(A,B,f)_2^{p,-p}$ is injective for every $p \geq 2$.
	\end{enumerate}
	Then $A$ is also formal.
\end{thm}

\begin{proof}
	Here we once again closely follow the proof of Theorem 6.8 of \cite{Ma}. Without loss of generality, let us suppose that $A$ is minimal, and that moreover $R=r_1$ is concentrated in weight $1$. As usual, we denote by
	\[   Q= q_1 + q_2 + \dots   \]
	the weight decomposition of $Q$. Thanks to Lemma \ref{lem:ssfunct}, the $\cP_\infty$-morphism $f$ induces two morphisms of spectral sequences
	\[ \xymatrix{
		E(A) \ar[r]^{f_*\ \ \ } & E(A,B;f) & E(B). \ar[l]_{\ \ \ \ \  f^*}
	} \]
	Let $k\geq 2$ be the smallest integer such that $q_k \neq 0$. Using Lemma \ref{lem:transfer}, we know that 
	\[ E(A)^{p,1-p}_2=E(A)^{p,1-p}_k, \ \ \ \ E(A,B,f)^{p,1-p}_2 = E(A,B,f)^{p,1-p}_k,  \]
	and as a consequence we get that also $f_* : E(A)^{p,1-p}_k \to E(A,B,f)^{p,1-p}_k$ is injective for every $p \geq 2$. By looking at the Euler classes $e_A$, $e_B$, $e_f$, we get that $f_*(e_A)=e_f$, and similarly $f^*(e_B)=e_f$. Since $B$ is formal, we can apply the formality criteria $(4)$ of Corollary \ref{cor:formality2}, and therefore get that $d_r(e_B)=0$ for every $r$. It follows that 
	\[ f^*(d_r(e_B)) = d_r(f^*(e_B))= d_r(e_f) = d_r(f_*(e_A)) = f_*(d_r(e_A)) = 0. \]
	Notice that by definition $d_r(e_A) \in E(A)_r^{p,1-p}$. But by the injectivity assumption, we immediately get that $d_r(e_A) = 0$ as soon as $2 \leq r \leq k$. The claim now follows from a straightforward application of Theorem \ref{thm:kaledin-euler} and Theorem \ref{thm:formality1}.
\end{proof}

\section{Examples }

In this section, we look more closely at the special cases where the Koszul operad $\cP$ is the operad of Lie algebras or the operad of associative algebras. In particular, we show how our results are in fact generalizations of the main theorems of \cite{Ka} and \cite{Ma}.

\subsection{Associative algebras}

Let $\cP=\Ass$ be the operad encoding associative algebras. In this case $\cP_\infty$-algebras are called $A_\infty$-algebras. In the paper \cite{Ka}, Kaledin addresses the question of finding obstructions to formality of a given associative algebra $A$. Let us briefly recall, mainly following the exposition presented in \cite{Lu}, the content of Kaledin's paper.

\begin{remark}
	As already mentioned in the introduction, it is important to remark that the arguments of Kaledin work for algebras over a possibly non-trivial base $\bK$-algebra $R$. Geometrically, this corresponds to investigating formality for \emph{families} of $A_\infty$-algebras. We did not attempt to reach that level of generality in the present text, but we certainly feel that our methods can be smoothly adapted to the case of families of $\cP$-algebras, where $\cP$ is any sufficiently nice Koszul operad. We plan to come back to this question in a future work. 
\end{remark}

Let $(A,Q)$ be a minimal $A_\infty$-algebra, and let $h$ be a formal variable. With $Q=q_1+q_2+ \dots$ we denote as usual the weight decomposition of the coderivation $Q$. Kaledin observes that one can construct another minimal $A_\infty$-algebra $(A[h],Q')$, where the weight components of $Q'$ are given by
\[ Q' = q_1 + hq_2 + h^2q_3 \dots \]
It can be checked that $\widetilde{A}:=(A[h], Q')$ is indeed a $\bK[h]$-linear $A_\infty$-algebra.
The algebra $\widetilde{A}$ is an algebraic incarnation of the deformation of $A$ to the normal cone, and it is easy to verify that $A$ is formal if and only if $\widetilde{A}$ is. Notice also that the quotient $\widetilde{A}/h^{n+1}$ is automatically a minimal $\bK[h]/h^{n+1}$-linear $A_\infty$-algebra.

To $\widetilde{A}/h^{n+1}$, Kaledin associates a cohomology class as follows. The $A_\infty$-structure in encoded by a coderivation
\[  Q'_{\widetilde{A}/h^{n+1} } = q_1 + hq_2 + \dots h^nq_{n+1}  \]
which squares to zero. In particular, if we denote by $\delta_h$ the formal derivative with respect to $h$, then  $\delta_h(Q'_{\widetilde{A}/h^{n+1} })$ defines a cocycle in the $\bK[h]/h^n$-linear Hochschild complex of $\widetilde{A}/h^n$. The associated degree 1 element $[\delta_h(Q'_{\widetilde{A}/h^{n+1} })]$ in the Hochschild cohomology is the Kaledin class of $\widetilde{A}/h^{n+1}$.

One of the main results in \cite{Ka} and in \cite{Lu} states that the $A_\infty$-algebra $A$ is formal if and only if all the cohomology classes defined by $\delta_h(Q'_{\widetilde{A}/h^{n+1} })$ are zero for all $n \geq 1$. The following Proposition shows that the more general Definition \ref{defi:kaledinclass} specializes to the notion of Kaledin-Lunts in the case $\cP= \Ass$.

\begin{prop}
	Let $A$ be a minimal $A_\infty$-algebra. Then the class $[\delta_h(Q'_{\widetilde{A}/h^{n+1} })] $ is zero if and only if $K_A^{\leq n} = 0$. 
\end{prop}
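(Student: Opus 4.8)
The plan is to unwind both sides of the claimed equivalence to a statement about the coderivation $Q' = q_1 + hq_2 + h^2 q_3 + \dots$ on $\cC(A[h])$ and its formal $h$-derivative, and to identify the Kaledin-Lunts cocycle with the truncation $\widetilde{Q}^{\leq n}$ of Definition \ref{defi:kaledinclass} up to the bookkeeping of the formal variable $h$. First I would make precise the dictionary between the weight grading on $\Coder(\cC(A))$ and powers of $h$: the $\bK[h]$-linear Hochschild complex of $\widetilde{A}/h^{n+1}$ is, as a graded object, $\bigoplus_{0 \le j \le n} h^j \, \Coder(\cC(A))$, with differential $[Q'_{\widetilde A/h^{n+1}}, -]$; the operator $\delta_h$ sends $h^j x$ to $j h^{j-1} x$. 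Applying $\delta_h$ to $Q'_{\widetilde A/h^{n+1}} = q_1 + hq_2 + \dots + h^n q_{n+1}$ gives $q_2 + 2h q_3 + \dots + n h^{n-1} q_{n+1}$, which under the substitution $h=1$ (i.e. forgetting the $h$-grading but remembering the weight) is exactly $\widetilde{Q}^{\le n+1}$ in the notation of Section \ref{sect:kaledin}. So there is an index shift: the class $[\delta_h(Q'_{\widetilde A/h^{n+1}})]$ should correspond to $K_A^{\le n+1}$, and the cleanest thing is to phrase the comparison carefully (or simply note the statement as written with the convention that the ``$\le n$'' truncation in \cite{Ka}, \cite{Lu} keeps $q_2,\dots,q_{n+1}$).

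Second, I would check that the differentials match up. The differential on the Kaledin-Lunts complex is $[Q'_{\widetilde A/h^{n+1}}, -]$ computed $\bK[h]/h^{n+1}$-linearly; the differential $d_Q$ appearing in Definition \ref{defi:kaledinclass}, when restricted to the quotient $F^1\Coder(\cC(A))/F^{n+2}$, is $[Q,-]$. The key point is that the $\bK[h]/h^{n+1}$-linear bracket $[q_1 + hq_2 + \dots, -]$, when expanded by weight, has exactly the same structure constants as $[q_1 + q_2 + \dots, -]$ truncated appropriately, because the $h$-degree of $q_i$ in $Q'$ is $i-1$, which is precisely the weight shift $i-1$ that the bracket already records. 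Thus the $\bK[h]/h^{n+1}$-linear Hochschild complex of $\widetilde A/h^{n+1}$ is canonically isomorphic, as a complex, to a suitable weight-truncation of $(\Coder(\cC(A)), d_Q)$; one should state this isomorphism explicitly and verify it is a chain map (a routine check on weight components, using that the generating operations have cohomological degree $0$ so no extra signs intervene).

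Third, under this identification the cocycle $\delta_h(Q'_{\widetilde A/h^{n+1}})$ maps to $\widetilde Q^{\le n+1} = q_2 + 2q_3 + \dots + n q_{n+1}$, which is precisely the cocycle defining $K_A^{\le n+1}$ (respectively $K_A^{\le n}$ under the other index convention). Hence its Hochschild cohomology class vanishes if and only if the corresponding truncated operadic Kaledin class vanishes. I would conclude by remarking that this is compatible with Theorem \ref{thm:formality1} and the main result of \cite{Ka}, \cite{Lu}: both say that $A$ is formal iff all truncations vanish iff $K_A = 0$.

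The main obstacle I anticipate is purely organizational rather than mathematical: getting the index shift between the two truncation conventions exactly right, and being careful that the ``deformation to the normal cone'' algebra $\widetilde A = (A[h], Q')$ really does have $\widetilde Q = \delta_h Q'$ playing the role of the canonical degree-$1$ cocycle, so that the $\bK[h]$-linear structure is faithfully recording the weight grading with no loss of information. Once the dictionary (weight $i$ component $\leftrightarrow$ coefficient of $h^{i-1}$) and the compatibility of brackets and differentials are nailed down, the proof is a direct translation and no deeper input is needed beyond Proposition \ref{prop:n-formality} and the definitions.
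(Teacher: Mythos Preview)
Your approach is essentially the paper's: both arguments set up the dictionary ``weight $i$ component $\leftrightarrow$ coefficient of $h^{i-1}$'' and then unwind the coboundary condition weight by weight. The paper does this by writing $t = t_1 + ht_2 + \dots + h^{n-1}t_n$, reading off the coefficient of each $h^s$ in $[Q'_{\widetilde A/h^n},t]=\delta_h(Q'_{\widetilde A/h^{n+1}})$, and observing that after imposing the harmless normalization ``$t_j$ has weight $j$'' these equations are exactly $[Q,t'] \equiv \widetilde Q$ modulo a weight truncation. Your proposal to package this as an explicit isomorphism of complexes is just a more conceptual phrasing of the same computation.

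Two small points. First, the paper places the cocycle $\delta_h(Q'_{\widetilde A/h^{n+1}})$ in the $\bK[h]/h^n$-linear Hochschild complex of $\widetilde A/h^n$, not of $\widetilde A/h^{n+1}$ as you wrote; since $\delta_h$ lowers the $h$-power by one, the top term is $nh^{n-1}q_{n+1}$ and one naturally works modulo $h^n$. This is bookkeeping, not a mathematical change. Second, your observation about the index shift is well taken: expanding carefully, the Kaledin--Lunts coboundary equation over $\bK[h]/h^n$ yields the system $\sum_{i+j=w}[q_i,t_j]=(w-1)q_w$ for $w=2,\dots,n+1$, which is precisely $K_A^{\le n+1}=0$ rather than $K_A^{\le n}=0$. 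The paper's own proof is a bit loose on this point. For the intended application (all Kaledin--Lunts truncations vanish $\Leftrightarrow$ all $K_A^{\le n}$ vanish $\Leftrightarrow$ $A$ is formal) the off-by-one is of course harmless, but your instinct to flag it is correct.
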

\begin{proof}
	The Hochschild cohomology class defined by $\delta_h(Q'_{\widetilde{A}/h^{n+1} })$ is zero if and only if there exists a degree zero element $t$ in the Hochschild complex of $\widetilde{A}/h^n$ such that $[Q'_{\widetilde{A}/h^n}, t]= \delta_h(Q'_{\widetilde{A}/h^{n+1} })$. Let us write
	\[ t = t_1 + ht_2 + \dots h^{n-1}t_{n}\]
	for the decomposition of $t$ in terms of the various powers of $h$. Notice that all the $t_i$'s are degree zero elements in the Hochschild complex of $A$. Let now $s$ be an integer such that $0 \leq s < n$; by looking at the coefficients of $h^s$, we find that the condition $[Q'_{\widetilde{A}/h^n}, t]= \delta_h(Q'_{\widetilde{A}/h^{n+1} })$ is equivalent to the equations
	\[ \sum_{j=0}^s [q_{j+1},t_{s-j}] = (s+1)q_{s+2}  \]
	for every $s$.
	Moreover, by weight considerations we can assume that the weight of $t_j$ is $j$.
	
	But it is now immediate to verify that the conditions satisfied by the elements $t_1,\dots, t_{n}$ are equivalent to the more compact equation
	\[ [Q^{\leq n}, t'] = \widetilde{Q}^{\leq n}, \]
	where $t'$ is the element of $\Coder(\cC(A))$ whose weight decomposition is given by
	\[t' = t_1 + t_1 + \dots + t_{n}, \]
	and $Q^{\leq n}$ and $\widetilde{Q}^{\leq n}$ are as in Section \ref{sect:kaledin}.

\end{proof}

In this sense, Theorem \ref{thm:formality1} specializes to the formality criterion described by Kaledin and Lunts if the base ring is trivial.
We remark however that the formality criteria of items $(3)$ and $(4)$ of Corollary \ref{cor:formality2} were not discussed in the paper \cite{Ka} and \cite{Lu}.

Moreover, the formality transfer criterion of Theorem \ref{thm:transfer} appears to be new in the context of associative algebras.

\subsection{Lie algebras}

Let now $\cP = \Lie$ be the operad of dg Lie algebras. In this case, algebras over $\cP_\infty$ are referred to as $L_\infty$-algebras.
In his paper \cite{Ma}, Manetti studies formality criteria for dg Lie algebras (and $L_\infty$-algebras). As we have mentioned, his results have a somewhat different flavour from those of Kaledin. 

In fact, Manetti starts with studying the Lie-incarnation of the operadic cohomology spectral sequence of Section \ref{sect:spectralseq}, called the Chevalley-Eilenberg spectral sequence. He defines an appropriate Euler class in the Chevalley-Eilenberg spectral sequence, and goes to show that it controls formality of $L_\infty$-algebras. It is straightforward to check that Definition \ref{defi:spectralseq} of operadic cohomology spectral sequences and Definition \ref{defi:euler} of operadic Euler derivations give back the analogous notions described by Manetti.

More specifically, one of the main results of \cite{Ma} is the equivalence of items $(1)$-$(3)$-$(4)$ of Corollary \ref{cor:formality2}, in the particular case where $\cP =\Lie$.
Similarly to what happened in the case of associative algebras, we find that Theorem 6.3 of \cite{Ma} is a consequence of the more general Corollary \ref{cor:formality2}. Moreover, the formality transfer theorem for dg Lie algebras (which is Theorem 6.8 in \cite{Ma}) is a special case of its operadic version contained in Theorem \ref{thm:transfer}.

\end{document}